\numberwithin{equation}{section}
\numberwithin{table}{section}
\numberwithin{figure}{section}
\newtheorem{theorem}{Theorem}[section]
\newtheorem{lemma}[theorem]{Lemma}
\newtheorem{proposition}[theorem]{Proposition}
\newtheorem{corollary}[theorem]{Corollary}
\newtheorem{definition}[theorem]{Definition}
\newtheorem{example}[theorem]{Example}
\newtheorem*{theorem*}{Theorem}
\renewcommand{\H}{{\mathbb H}}      
\newcommand{\C}{{\mathbb C}}      
\newcommand{\R}{{\mathbb R}}      
\newcommand{\Z}{{\mathbb Z}}      
\newcommand{\N}{{\mathbb N}}      
\newcommand{\CP}{{\mathbb{CP}}}      
\newcommand{\ol}{\overline}
\renewcommand{\vec}[1]{\mathbf{#1}}
\title{The twistor discriminant locus of the Fermat cubic}
\author{John Armstrong}
\date{April 2014}
\begin{document}

\maketitle

\begin{abstract}
We consider the discriminant locus of the Fermat cubic under the twistor fibration $\CP^3 \longrightarrow S^4$. We show that it has a conformal symmetry group of order $72$ and use this to identify its topology.
\end{abstract}

\section{Introduction}

Orientation preserving conformal maps of $S^4 =
\R^4 \cup \{\infty\}$ to itself give rise, via the twistor construction, to
projective transformations of the twistor space $\CP^3$.
So, in the spirit of Klein's Erlangen program, one can choose to
consider the classical geometry of $\CP^3$ modulo the projective
transformations or the twistor geometry of $\CP^3$ modulo the
orientation preserving conformal maps of $S^4$. In particular instead
of classifying algebraic surfaces up to projective
transformation, one can instead attempt to classify them up to
orientation preserving conformal transformation.

For the sake of brevity, in the rest of this paper, we will redefine conformal
to mean orientation and angle preserving rather than merely
angle preserving.

The defining polynomial equation of a degree $d$ complex surface
$\Sigma$ in $\CP^3$ reduces to a polynomial in one variable of
degree $d$ on each fibre of the twistor fibration. So when
restricted to $\Sigma$, the twistor fibration gives a
$d$-sheeted cover $S^4$. The branch locus is given by the points
where the discriminant of the polynomial on each fibre vanishes,
hence this is called the discriminant locus.

The topology of the discriminant locus is an invariant of
$\Sigma$ under the conformal group.  Thus understanding the
topology of the discriminant locus is a natural question when
classifying surfaces modulo the conformal group.

Related to the study of the discriminant locus is the study of twistor lines.
If a fibre of the twistor projection is contained in $\Sigma$,
then the polynomial on that fibre completely vanishes. Such a fibre is called a twistor line of $\Sigma$. Its projection onto $S^4$ is a singular point of the discriminant locus.

The classification of non-singular degree $2$ surfaces under
conformal transformations was completed in
\cite{salamonViaclovsky}. The classification shows that for
non-singular degree $2$ surfaces there are always either
$0$, $1$, $2$ or $\infty$ fibres of the twistor fibration lying on
the surface and that the topology of the discriminant locus is
completely determined by the number of number of twistor lines.

A study of twistor lines on cubic surfaces was made in \cite{armstrongSalamon}.
The aim of this paper is to consider the topology of the
discriminant locus on such surfaces and, in particular, to calculate
the topology of the discriminant locus for the Fermat cubic. This
is the cubic surface given by the equation
\begin{equation} z_1^3 + z_2^3 + z_3^3 + z_4^3 = 0.
\label{fermatCubic}
\end{equation}
A key ingredient in the proof is the observation that the discriminant locus has a suprisingly large conformal symmetry group of order $72$. By contrast, the Fermat cubic itself has only $6$ conformal symmetries.

The question of computing the topology of the discriminant locus in degrees $d\geq2$ was raised in \cite{povero}. The topology of a projectively, but not conformally, equivalent cubic surface was computed informally in \cite{armstrongSalamonSigma}. However, the argument in \cite{armstrongSalamonSigma} is not fully rigorous since it depends upon visual examination of the discriminant locus. This paper contains the first fully rigorous calculation of the discriminant locus of an irreducible surface of degree $d\geq2$.

The structure of the remainder of the paper is as follows.

We begin by clarifying our notation and reviewing the twistor fibration in Section \ref{reviewSection}. The reader should consult \cite{atiyah} for further background on the Twistor fibration.

In Section \ref{algebraicSection} we prove the basic algebraic facts about the discriminant locus. We show in particular how to choose coordinates that significantly reduce the algebraic complexity of the discriminant locus when it has a twistor line.

In Section \ref{topologicalSection} we prove some general facts
about the topology of the discriminant locus. In particular we compute its Euler characteristic, dimension and orientability and consider its (non)-smoothness properties.

In Section \ref{singularSurfacesSection} we review the classification of surfaces with isolated singularities. The purpose of this is to establish a graphical notation which we can use to unambiguously describe the topology of a singular surface.

In Section \ref{fermatSection} we compute the topology
of the discriminant locus of the Fermat cubic using a visual inspection. The aim
of the remaining two sections is to provide a rigorous justification for this computation.
In Section \ref{symmetrySection} we analyse the symmetries of the discriminant locus 
which allows us to significantly simplify the problem. With these simplifications in place,
in Section \ref{cylindricalDecompositionSection} 
we are able to apply the cylindrical algebraic decomposition algorithm
to complete the proof.

\section{Review of the twistor projection $\CP^3 \rightarrow S^4$}
\label{reviewSection}

Let us quickly review the setup.

Define two equivalence relations on $(\H \times \H )\setminus \{0\}$, denoted $\sim_\C$ and
$\sim_\H$, by
\[ (q_1,q_2)\sim_\H(\lambda q_1, \lambda q_2) \quad  \lambda \in \H \setminus \{ 0 \} \] 
\[ (q_1,q_2)\sim_\C(\lambda q_1, \lambda q_2) \quad  \lambda \in \C \setminus \{ 0 \} \] 
$((\H \times \H) \setminus \{0\})/ \sim_\C \cong \CP^3$ and $((\H \times \H) \setminus \{0\})/ \sim_\H
\cong {\mathbb HP}^1 \cong \R^4 \cup \{ \infty \} \cong S^4$.
An explicit map from $\CP^3$ to $((\H \times \H) \setminus \{0\})/\sim_\C$ is given by
$[z_1,z_2,z_3,z_4] \rightarrow [z_1+z_2 j, z_3 + z_4j]_{\sim_\C}$.

The map $\pi:[q_1,q_2]_{\sim_\C} \longrightarrow
[q_1,q_2]_{\sim_\H}$ is the twistor fibration.

Left multiplication by the quaternion $j$ induces an
antiholomorphic involution of $\CP^3$ given by $[q_1,q_2]_{\sim_\C} \mapsto [j q_1,j
q_2]_{\sim_\C}$. We will call this involution $j$, it should be
clear from the context whether we are referring to the quaternion $j$
or to this map. The map $j$ acts on each fibre of $\pi$ as the antipodal map.

The group of conformal transformations of $S^4$ is given by the
quaternionic M\"obius transformations acting on the quaternionic
projective space on the right. That is for $4$ quaternions $a$,
$b$, $c$, $d$ we define the associated
M\"obius transformation by:
\[ [q_1. q_2]_\sim \rightarrow [ q_1 a + q_2 b, q_1 c + q_2
d]_\sim. \]
This can be viewed either as a conformal transformation of $S^4$
or as a fibre preserving holomorphic map of $\CP^3$. Thinking of
$S^4$ as $\H \cup \{ \infty \}$ we can write this map as:
\[ q \rightarrow (qc + d)^{-1}(qa + b). \]

If we use the notation $q=q_1 + q_2j$ to split a quaternion into two complex
components $q_1$ and $q_2$ then we can explicitly write the projective
transformation associated with the M\"obius transformation $(qc + d)^{-1}(qa + b)$
as:
\[ \left( \begin{array}{cccc}
a_1 & -\ol{a_2} & b_1 & -\ol{b_2} \\
a_2 & \ol{a_1} & b_2 & \ol{b_1} \\
c_1 & -\ol{c_2} & d_1 & -\ol{d_2} \\
c_2 & \ol{c_1} & d_2 & \ol{d_1}
\end{array} \right)
\]
Thus a projective transformation arises from a conformal transformation of $S^4$
if and only if it has the complex conjugation symmetries shown in the matrix
above.

Let us give formal definitions of the central notions in this paper:

\begin{definition} The \em{discriminant locus} ${\cal D}$ of a degree $d$
complex surface $\Sigma$ in $\CP^3$ is the set:
\[ \{ x \in S^4 : \#( \pi^{-1}(x) ) < d \}. \]
\end{definition}
\begin{definition}
A \em{twistor line} of $\Sigma$ is a fibre of $\pi$ which lies entirely within $\Sigma$.
\end{definition}
\begin{definition}
When $d=3$ we define the set of \em{triple points} of $\Sigma$ to be the set:
\[ \{ x \in S^4 : \#( \pi^{-1}(x) ) = 1 \} \]
\end{definition}

\section{Algebraic properties of the discriminant locus}
\label{algebraicSection}

We can now state the main algebraic results about the
discriminant locus.

\begin{proposition}
With the exception of a possible point at $\infty$, the
discriminant locus of a general degree $d$ surface is described by
the zero set of a complex valued polynomial of degree $2d(d-1)$ in
the real coordinates $\vec{x}=(x_1,x_2,x_3,x_4)$ for $S^4 = \R^4
\cup \{ \infty \}$.
If the surface contains a twistor line over $\infty$ this simplifies to a polynomial of degree $2(d-1)^2$.
\label{algebraicProposition}
\end{proposition}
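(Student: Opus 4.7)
The plan is to pull the defining polynomial $F$ back along the twistor fibration restricted to a single fibre, then apply the classical discriminant of a binary form. The degree count will follow by tracking how the $x$-degree enters the coefficients of that binary form.

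First, identify $S^4 \setminus \{\infty\}$ with $\H$ via $q \mapsto [q, 1]_{\sim_\H}$, and split each quaternion as $q = q_a + q_b j$ with $q_a = x_1 + ix_2$ and $q_b = x_3 + ix_4$. Using the identity $jc = \ol{c}\, j$ for $c \in \C$, a direct computation shows that the fibre $\pi^{-1}(q)$ is the image in $\CP^3$ of the embedding
\[ [a : b] \;\longmapsto\; [\,a q_a - b \ol{q_b}\,:\,a q_b + b \ol{q_a}\,:\,a\,:\,b\,]. \]
In particular, when restricted to the fibre, $z_1$ and $z_2$ are linear forms in $(a,b)$ whose coefficients have $x$-degree $1$, while $z_3$ and $z_4$ are linear forms in $(a,b)$ with constant coefficients in $x$.

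Substituting into $F$ yields
\[ P_q(a,b) := F\bigl(a q_a - b \ol{q_b},\; a q_b + b \ol{q_a},\; a,\; b\bigr), \]
a binary form of degree $d$ in $(a,b)$ whose coefficients $a_0(x), \ldots, a_d(x)$ are complex-valued polynomials of $x$-degree at most $d$. The point $q$ lies in the discriminant locus precisely when $P_q$ has a repeated root in $\CP^1$, which is detected by the vanishing of the classical discriminant $\Delta$, a universal polynomial of degree $2(d-1)$ in the coefficients of a degree-$d$ binary form. Composing, $\Delta(a_0(x), \ldots, a_d(x))$ is a single complex polynomial in $x$ of degree at most $2d(d-1)$ whose zero set, possibly together with the point $\infty$, is the discriminant locus.

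For the refinement, observe that the fibre over $\infty = [1,0]_{\sim_\H}$ is exactly the line $\{z_3 = z_4 = 0\} \subset \CP^3$. If this line lies on $\Sigma$, then $F$ belongs to the ideal $(z_3, z_4)$, so we may write $F = z_3 G_1 + z_4 G_2$ with $G_1, G_2$ homogeneous of degree $d-1$. The substitution then gives $P_q(a, b) = a \tilde G_1(a,b) + b \tilde G_2(a,b)$, where each $\tilde G_k$ is a binary form of degree $d-1$ in $(a,b)$ with coefficients of $x$-degree at most $d - 1$. Hence the coefficients of $P_q$ now have $x$-degree at most $d - 1$, and the composed discriminant has $x$-degree at most $2(d-1)(d-1) = 2(d-1)^2$. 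The only real work is getting the fibre parameterization right in quaternion coordinates; after that the degree counts are mechanical, and the word ``general'' enters only to ensure that the resulting discriminant polynomial does not identically vanish.
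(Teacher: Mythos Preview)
Your proof is correct and follows essentially the same strategy as the paper's: parametrize each twistor fibre, pull back $F$ to a degree-$d$ binary form on $\CP^1$, and apply the classical discriminant. Your explicit fibre parametrization makes the $x$-degree bound on the coefficients immediate, whereas the paper reaches the same bound via an affine-curve argument together with a Vandermonde interpolation, and your handling of the twistor-line case through the ideal membership $F \in (z_3, z_4)$ is likewise a cleaner route to the same $d-1$ drop.
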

\begin{proof}
Let the surface be defined by the equation
$f(z_1,z_2,z_3,z_4)=0$ for a homogeneous polynomial $f$ of degree $d$.

Define a map $p_1:\R^4 \longrightarrow \CP^3$ by
$p_1(x_1,x_2,x_3,x_4) = [x_1 + x_2 i +x_3 j + x_4
k,1]_{\sim_\C}$.
Define $p_2=j \circ p_1$.

Define $\psi:\R^4 \times \CP^1 \longrightarrow \CP^3$ by
$\psi(\vec{x},[\lambda_1,\lambda_2]) = \lambda_1 p_1(\vec{x}) +
\lambda_2 p_2(\vec{x})$. Thinking
of $S^4$ as $\R^4 \cup \{ \infty \}$, one sees that $\psi$ gives
a trivialization of the twistor fibration away from the point
$\infty$.

Viewing $\R^4$ as $\C \times \C$ we can define $p_1(w_1,w_2)$
for complex numbers $w_1=x_1+i x_2$ and
$w_2=x_3 + i x_4$. So $f \circ p_1$ is an inhomogeneous
polynomial in $w_1$, $w_2$ of degree at most $d$. Its zero set
defines a complex affine curve ${\cal C}_1$. This affine curve is
mapped by $p_1$ to the intersection of the surface $f=0$ and
the affine plane in $\CP^3$ defined by the conditions $z_4=0$
and $z_3 \neq 0$. $p_1({\cal C}_1)$ thus lies inside
the degree $d$ curve ${\cal C}_2$ defined by the intersection of $f=0$ and the plane $z_4=0$.

If ${\cal C}_2$ is irreducible, we may conclude that $f \circ
p_1$ is of degree exactly $d$. However if ${\cal C}_2$ contains
the line defined by the conditions $z_3=z_4=0$ then $c \circ
p_1$ will be of degree at most $d-1$. We deduce that $f \circ p_1$ will be of degree less than $d$ if the surface $f=0$ contains the line $z_3=z_4=0$. Equivalently $f \circ p_1$ will be of degree $\leqslant d-1$ if and only if the surface $f=0$ contains the twistor line over $\infty$.

Using the same argument with a different identification of
$\R^4$ and $\C \times \C$ gives the same result for $f \circ
p_2$. Indeed one has this result for any function $f \circ
(\lambda_1 p_1 + \lambda_2 p_2)$ for complex numbers $\{
\lambda_1, \lambda_2 \} \in \C^2 \setminus \{0\}$.

We deduce that the function $f(\vec{x},\lambda) = c( p_1(\vec{x}) +
\lambda p_2(\vec{x}))$ is a degree d polynomial in $\lambda$ with coefficients of degree $d$ in $\vec{x}$ (of degree less than $d$ if $f=0$ contains the twistor line over $\infty$). To see this write
$f(\vec{x},\lambda)=a_d(\vec{x}) \lambda^d+ a_{d-1}(\vec{x}) \lambda^{d-1} + \ldots
a_1(\vec{x}) \lambda + a_0(\vec{x}) $, for some
polynomials $a_i$. Inserting the values
$\lambda=0,1,2,\ldots,d$ in this expression gives $d+1$ linearly
independent expressions in the $a_i$ in terms of
the degree $d$ (or $d-1$) polynomials $f(\vec{x},\omega)$. Solving
these equations allows us to express $a_i$ as
degree $d$ (or $d-1$) polynomials.

Recall that the discriminant of a polynomial of degree $d$ is a polynomial of degree $2(d-1)$ in the coefficients. The
result now follows.
\end{proof}

For cubic surfaces, the discriminant locus is of degree $12$ or, if their is a twistor line, of degree $8$.

We will also be interested in the number of triple points of a cubic surface. If we recall that the cubic $a \omega^3 + b \omega^2 + c \omega + d$ has a triple root if and only if $b^2 - 3 a c=0$ and $9 a d-b c = 0$ we find from the argument above:
\begin{proposition}
The set of triple points of a cubic surface is the zero set of two
complex polynomials of degree $8$
in $\vec{x}$. If there is a twistor line at $\infty$ these
simplify to polynomials of degree $4$.
\end{proposition}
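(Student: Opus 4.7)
The plan is to recycle the coefficient analysis from Proposition~\ref{algebraicProposition} and combine it with the classical triple-root criterion for a cubic stated in the paragraph immediately preceding the statement. This is the exact analogue of how Proposition~\ref{algebraicProposition} combined that coefficient analysis with the discriminant criterion for a repeated root.

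Concretely, I would first invoke the proof of Proposition~\ref{algebraicProposition} to write the restriction of $f$ to the twistor fibres as
\[
f(\vec{x},\lambda)=a(\vec{x})\lambda^3+b(\vec{x})\lambda^2+c(\vec{x})\lambda+d(\vec{x}),
\]
where $a,b,c,d$ are polynomials in $\vec{x}$ of degree at most $3$ (or at most $2$ when the surface contains the twistor line over $\infty$). The set of triple points is then precisely the locus in $\R^4$ where $b(\vec{x})^2-3a(\vec{x})c(\vec{x})=0$ and $9a(\vec{x})d(\vec{x})-b(\vec{x})c(\vec{x})=0$ simultaneously, giving the two complex polynomial equations required. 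A possible point at infinity would be treated separately, just as in Proposition~\ref{algebraicProposition}.

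What remains is degree bookkeeping. Each of the two algebraic conditions is quadratic in the coefficients $(a,b,c,d)$, and those coefficients have controlled degree in $\vec{x}$ by Proposition~\ref{algebraicProposition}, so the degree bounds in the statement fall out by direct multiplication. The only conceptual point that needs a quick check is that neither condition alone carves out the triple-point locus (a generic double root satisfies neither of the two equations, whereas the discriminant vanishes there), and that together the two equations exactly characterise a triple root; this is a standard fact about cubics, obtained by running the Euclidean algorithm on $P(\omega)$ and $P'(\omega)$ and demanding that the remainder vanish. I expect no real obstacle beyond this routine bookkeeping, since all the geometry has already been done in Proposition~\ref{algebraicProposition}.
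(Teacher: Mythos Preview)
Your proposal is correct and follows the paper exactly: the paper's entire proof is the single sentence preceding the proposition, which invokes the triple-root criterion $b^{2}-3ac=0$, $9ad-bc=0$ together with the coefficient analysis of Proposition~\ref{algebraicProposition}, precisely as you outline. One remark on the bookkeeping you defer: quadratic conditions in coefficients of degree $\leqslant 3$ actually give degree $6$, not the stated $8$, so the general-case figure in the proposition appears to be a slip in the paper that your argument would in fact sharpen (the twistor-line case does give $2\cdot 2=4$ as claimed).
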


We make some remarks.
\begin{enumerate}[(i)]
\item The discriminant locus is defined by the zero set of a
complex valued polynomial. Taking real and imaginary parts of
this polynomial, the discriminant locus can be defined by
intersection of the zero sets of two real valued polynomials.
\item It is straightforward to perform this calculation
explicitly with a computer algebra system to find the explicit
polynomials. When one notices that a general degree 12 polynomial
in 4 variables has $1820$ coefficients and a general degree 8 polynomial in 4 variables has $495$ coefficients then the need for computer algebra in calculations becomes obvious.
\item The drop in the degree when one has a twistor line at
infinity should be seen as a substantial simplification. Many of
the known algorithms in computational geometry have
computational complexity of the approximate form $p(d)^{c^{n-1}}$ where $p$
is a polynomial in the degree $d$ of the polynomials under consideration, $n$ is the
dimension and $c$ is some constant. Key examples of such algorithms
are the computation of Gr\"obner bases (see \cite{mayrAndMeyer, dube}) and of
cylindrical algebraic decompositions (see \cite{basu}).
\end{enumerate}

\section{The topology of the discriminant locus}
\label{topologicalSection}

Let us make some general observations about the topology of the
discriminant locus.

\begin{definition}
The {\em double locus} of ${\cal D}$ denoted ${\cal D}^\prime$ is the set of points $x$ in ${\cal D}$ where $f_{\restriction \pi^{-1}(x)}$ has a unique double point. 
\label{smoothPointDefinition}
\end{definition}

\begin{proposition}
The discriminant locus ${\cal D}$ of a degree $d$ complex
surface $\Sigma$ defined by the equation $f=0$, with $f$ a square
free polynomial, is compact and of real dimension $\leqslant 2$. The set ${\cal D}^\prime$ is a smooth orientable real surface.
\label{orientabilityProposition}
\end{proposition}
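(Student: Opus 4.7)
I would tackle the three assertions --- compactness, $\dim_\R \mathcal{D} \leqslant 2$, and smoothness and orientability of $\mathcal{D}'$ --- in turn. Compactness is immediate once one notes that $\mathcal{D}$ is closed in the compact space $S^4$: on the chart $\R^4 = S^4 \setminus \{\infty\}$ it is the zero set of the continuous polynomial $D(\vec{x})$ supplied by Proposition \ref{algebraicProposition}, and closedness near $\infty$ is obtained from the analogous chart built with $p_2$ in place of $p_1$.

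For the dimension bound I would lift the problem to $\Sigma$ rather than try to analyse $D$ directly. Let $C \subset \Sigma^{\mathrm{sm}}$ denote the critical set of $\pi|_{\Sigma^{\mathrm{sm}}}$, consisting of those points at which the twistor fibre is tangent to $\Sigma$; then
\[
\mathcal{D} \;\subset\; \pi(C) \;\cup\; \pi(\Sigma^{\mathrm{sing}}) \;\cup\; \{\pi(\ell) : \ell \text{ a twistor line on } \Sigma\}.
\]
The twistor fibre through $p \in \Sigma^{\mathrm{sm}}$ is a complex curve and $T_p\Sigma$ is a complex $2$-plane, so the tangency condition is a single complex equation and cuts $C$ out as a complex subvariety of complex dimension $\leqslant 1$. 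Because $f$ is square free, $\Sigma$ is reduced and $\Sigma^{\mathrm{sing}}$ is likewise a proper complex subvariety of complex dimension $\leqslant 1$. Both project to subsets of $S^4$ of real dimension $\leqslant 2$, while twistor lines project to isolated points, giving the bound.

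For smoothness and orientability I would work locally near $x_0 \in \mathcal{D}'$ using the trivialisation $\psi$ from the proof of Proposition \ref{algebraicProposition}, identifying $\Sigma$ near the unique double point $p = (\vec{x}_0, \omega_0)$ with a subset of $\R^4 \times \C$ cut out by $\tilde f = f \circ \psi$. Since $\tilde f$ is holomorphic in $\omega$ with a root of order exactly $2$ at $\omega_0$, Weierstrass preparation yields a factorisation
\[
\tilde f(\vec{x}, \omega) \;=\; g(\vec{x}, \omega)\bigl((\omega - \omega_0)^2 + a(\vec{x})(\omega - \omega_0) + b(\vec{x})\bigr),
\]
with $g(p) \neq 0$ and $a(\vec{x}_0) = b(\vec{x}_0) = 0$, so that near $x_0$, $\mathcal{D} = \{\Delta = 0\}$ with $\Delta = a^2 - 4b$. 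Direct differentiation gives $d\Delta|_{\vec{x}_0} = -4\, db|_{\vec{x}_0}$ and $d\tilde f|_p = g(p)\, db|_{\vec{x}_0}$, so smoothness of $\Sigma$ at $p$ is equivalent to $d\Delta|_{\vec{x}_0}$ being surjective as an $\R$-linear map $\R^4 \to \C$. When this holds, $\mathcal{D}'$ is locally the level set of a submersion into $\C$ and hence a smooth real $2$-submanifold; its normal bundle inherits a complex structure from $d\Delta$ --- equivalently from the globally defined $dD$, which differs from it by a non-vanishing complex scalar --- and together with the ambient orientation of $S^4$ this determines a global orientation on $\mathcal{D}'$.

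The main obstacle is the gap between the fibrewise condition used to define $\mathcal{D}'$ --- ``$\tilde f(\vec{x}_0, \cdot)$ has a unique double root'' --- and the pointwise condition $p \in \Sigma^{\mathrm{sm}}$ demanded by the submersion argument: the two are not a priori equivalent, since $\partial_{\vec{x}}\tilde f(p)$ can vanish even when the fibrewise multiplicity at $\omega_0$ is exactly $2$. I would resolve this by reading the smoothness assertion as applying to the open subset of $\mathcal{D}'$ whose pre-image lies in $\Sigma^{\mathrm{sm}}$; any excluded points sit inside $\pi(\Sigma^{\mathrm{sing}})$, which the dimension bound has already pinned to real dimension $\leqslant 2$, so they contribute nothing of dimensional consequence to the statement.
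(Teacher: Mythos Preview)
Your compactness argument and the paper's coincide. For the smoothness of $\mathcal{D}'$ your Weierstrass-preparation route is a legitimate alternative to the paper's direct use of the implicit function theorem applied to $g = \partial_\lambda F$ on $\Sigma$: both hinge on $\partial_\lambda^2 F \neq 0$ at the double point, and you correctly flag that both tacitly require $\Sigma$ to be smooth there --- a subtlety the paper also glosses over. Your orientation argument via the complex structure on the normal bundle differs from the paper's (which compares the Fubini--Study form on the fibre with the symplectic form on $\Sigma$), but either suffices.

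The genuine gap is in your dimension bound. You claim that since $V_p$ is a complex line and $T_p\Sigma$ a complex $2$-plane, the tangency condition ``$V_p \subset T_p\Sigma$'' is a single complex equation cutting $C$ out as a \emph{complex} subvariety of $\Sigma^{\mathrm{sm}}$ of complex dimension $\leqslant 1$. That inference would require the vertical bundle $V = \ker d\pi$ to be a holomorphic subbundle of $T\CP^3$, which it is not: the twistor projection $\pi:\CP^3 \to S^4$ is not holomorphic (indeed $S^4$ carries no complex structure), and in the trivialisation $F(\vec{x},\lambda) = f(p_1(\vec{x}) + \lambda\, p_2(\vec{x}))$ one has $p_2 = j \circ p_1$, so $F$ depends on $\vec{x}$ through both $w_i$ and $\overline{w}_i$. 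Thus $\partial_\lambda F$ is merely a smooth complex-valued function on $\Sigma^{\mathrm{sm}}$, and its zero set is a real-algebraic, not a complex-analytic, subset; there is no a priori reason it has real codimension $2$. The paper bypasses this by a rank argument: at each critical point $z$ the kernel of $d\pi|_{T_z\Sigma}$ is exactly the real $2$-plane $V_z$, so the rank is exactly $2$, and the Sard--Federer theorem then bounds the Hausdorff dimension of the critical-value set by $2$, which in the real-algebraic category coincides with the dimension.
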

\begin{proof}
The discriminant locus can locally be written as the zero set of
a complex valued polynomial in the coordinates $\vec{x}$.
Therefore it is compact.

Write $\Sigma=\Sigma^1 \cup \Sigma^2$ where $\Sigma^1$ consists of the smooth points of $\Sigma$ and $\Sigma^2$ contains the non-smooth points. Since $f$ is square free, $\Sigma^2$ has complex dimension of one or less.

A point $x_0$ lies in ${\cal D} \setminus \pi(\Sigma^2)$ if and only if at some point in $z \in \pi^{-1}(x) \cap \Sigma^1$, the
tangent space $T_z \Sigma$ contains the tangent of the twistor fibre at $z$. Therefore the restriction of $\pi$ to $\Sigma^1$ has
rank $2$ at $z$. The Sard--Federer theorem then implies that the
Hausdorff dimension of ${\cal D} \setminus \pi(\Sigma^2)$
is $\leqslant 2$. Since we are working in the real algebraic category, Hausdorff dimension and dimension coincide. So the real dimension of ${\cal D}$ is $\leqslant 2$.

Suppose that $x_0$ is a point in ${\cal D}^\prime$. 
Let $z$ be the unique double point of $f_{\restriction
\pi^{-1}(x_0)}$. As described in the proof of
Proposition \ref{algebraicProposition}, we have
a chart $\psi:\R^4 \times \C^2 \rightarrow \CP^3$ given by
$\psi(\vec{x},\lambda) \rightarrow [p_1(\vec{x}) + \lambda
p_2(\vec{x}) ]$. We assume this chart is centred on $z$.
Suppose that $\Sigma$ is defined by the
polynomial $f$ and consider the function $F(\vec{x},\lambda)=f
\circ \psi$. $F$ is holomorphic in the
$\lambda$ component and smooth in the $\vec{x}$ component.
Since $z$ lies in ${\cal D}^\prime$ we have that $F$ and
$\frac{\partial F}{\partial \lambda}$ vanish at $0$ but that
$\frac{\partial^2 F}{\partial \lambda^2}$ does not.
Define the map $g:\R^4 \times \C \rightarrow \C$ by
$g=\frac{\partial F}{\partial \lambda}$. $\Sigma$ is tangent
to the fibre at $z$. $g_*( \frac{\partial}{\partial
\lambda})\neq 0$. Therefore when restricted to $T_z\Sigma$,
the differential $g_*$ has real rank $2$. Note that the fact $F$
is holomorphic in $\lambda$ is the crucial point here.
By the implicit function theorem $g^{-1}(0) \cap \Sigma$ is
locally a smooth $2$-manifold in a neighbourhood of $z$. Since
$x_0 \in {\cal D}^\prime$,
$\pi$ is a homeomorphism of a neighbourhood of $z$ onto a
neighbourhood of $x_0$.
Thus ${\cal D}^\prime$ is a smooth 2-manifold.

At $z$ the tangent space $T_z\Sigma$ can be written as the sum
of a vertical subspace $V$, given by the tangent space of the
fibre and a horizontal subspace $H$ with $\pi_*$ mapping $H$ isomorphically
to the tangent space of the discriminant locus.
There is a canonical symplectic form $\omega_V$ on V given by
the pull-back of the Fubini-Study metric onto the fibre.
Similarly there is a canonical symplectic form $\omega_\Sigma$ on
$T_z \Sigma$. So we can say that a non-degenerate two form
$\omega_H$ is positively oriented if $\omega_V \wedge \omega_H$
is a positive multiple of $\omega_\Sigma$. This defines an
orientation of $H$ and hence an orientation $T_{x_0}{\cal
D}^\prime$.
\end{proof}

It is quite possible that the dimension of the discriminant locus
of a surface $\Sigma$ is strictly less than $2$. For example:
the discriminant locus of a plane is just a point; there exist
quadrics whose discriminant locus is just a circle (\cite{salamonViaclovsky})). The discriminant locus of a reducible surface is given by the union of the discriminant loci of the intersections and the image of the intersection of components under $\pi$. This allows one to manufacture singular surfaces with pathological discriminant loci. For example consider the cubic surface consisting of three planes intersecting in a line. If the line of intersection is a twistor line, the discriminant locus is a single point. If the line of intersection is not a twistor line, then the discriminant locus is a round sphere and all points in the  discriminant locus are triple points or twistor lines.

\begin{proposition}
If $\Sigma$ is a non-singular complex surface of degree $d$ with $d>2$ then its discriminant locus has dimension $2$.
\end{proposition}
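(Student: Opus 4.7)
The plan is to argue by contradiction. By Proposition \ref{orientabilityProposition} we already have $\dim\mathcal{D}\le 2$, so I will assume $\dim\mathcal{D}\le 1$ and use a covering-space argument to force $d=1$. The first step is to bound the preimage $\pi^{-1}(\mathcal{D})\cap\Sigma$. It decomposes into the union of twistor lines entirely contained in $\Sigma$ (each a real $2$-dimensional piece) together with a remainder that maps finite-to-one onto $\mathcal{D}$ and hence has real dimension at most $\dim\mathcal{D}\le 1$. The hypothesis $d>2$ enters essentially here: a smooth surface of degree $\ge 3$ in $\CP^3$ contains only finitely many lines, and in particular only finitely many twistor lines. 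Therefore $\pi^{-1}(\mathcal{D})\cap\Sigma$ has real codimension $\ge 2$ in the real $4$-manifold $\Sigma$, and removing such a set from the connected manifold $\Sigma$ leaves it connected.

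With this in hand I would run the standard covering-space argument. At any point of $\Sigma\setminus\pi^{-1}(\mathcal{D})$ the tangent space to $\Sigma$ meets the vertical twistor direction only at zero, so $\pi|_\Sigma$ is a local diffeomorphism there; combined with properness inherited from compactness of $\Sigma$, the restriction $\Sigma\setminus\pi^{-1}(\mathcal{D})\to S^4\setminus\mathcal{D}$ is a $d$-fold covering map. Because $\mathcal{D}$ has codimension $\ge 3$ in the simply connected manifold $S^4$, its complement $S^4\setminus\mathcal{D}$ is again simply connected (any contracting disc of a loop can be perturbed to miss an at-most-$1$-dimensional set). Hence the cover is trivial and its total space has exactly $d$ components, contradicting the connectedness of $\Sigma\setminus\pi^{-1}(\mathcal{D})$ for any $d>1$.

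The step I expect to require the most care is justifying the decomposition of $\pi^{-1}(\mathcal{D})\cap\Sigma$: one needs that a positive-dimensional intersection of a complex surface with a twistor fibre forces the whole fibre to lie in the surface, which reduces to the fact that two irreducible complex algebraic curves in $\CP^3$ either coincide or meet in a finite set. Once this is granted the rest of the argument is bookkeeping. It is worth noting that the hypothesis $d>2$ is sharp in a transparent way: a smooth quadric may carry a whole circle of twistor lines \cite{salamonViaclovsky}, which boosts $\pi^{-1}(\mathcal{D})\cap\Sigma$ to real codimension $1$ and permits $\Sigma\setminus\pi^{-1}(\mathcal{D})$ to disconnect into the $d=2$ sheets of a genuinely nontrivial cover, so the argument cannot and should not extend to $d=2$.
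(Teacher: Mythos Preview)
Your proof is correct and follows essentially the same route as the paper's: assume $\dim\mathcal{D}\le 1$, observe that $S^4\setminus\mathcal{D}$ is simply connected so the unbranched $d$-fold cover $\Sigma\setminus\pi^{-1}(\mathcal{D})\to S^4\setminus\mathcal{D}$ must have $d$ components, and then derive a contradiction by using finiteness of lines on a smooth surface of degree $>2$ to conclude that $\pi^{-1}(\mathcal{D})\cap\Sigma$ has real codimension $\ge 2$ and hence its complement in $\Sigma$ is connected. Your write-up is somewhat more explicit about why the restriction is a covering map and about the decomposition of $\pi^{-1}(\mathcal{D})\cap\Sigma$, and you add the sharpness remark for $d=2$, but the underlying argument is identical to the paper's.
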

\begin{proof}
We have already shown that the dimension of the discriminant locus is no greater than $2$, so suppose for a contradiction that the discriminant locus ${\cal D}$ is $1$ (or less) dimensional. Then $S^4 \setminus {\cal D}$ is simply connected. The restriction $\pi_{\restriction \Sigma \setminus (\pi^{-1}{\cal D})}$ is a $d$-sheeted cover of $S^4 \setminus {\cal D}$ with no branch points. Therefore $\Sigma \setminus (\pi^{-1}{\cal D})$ has $d$ connected components. In particular it is disconnected if $d \geq 2$.

If $\Sigma$ is non-singular and $d>2$ then $\Sigma$ only contains a finite number of projective lines (see \cite{sarti} for a survey of the known bounds). Given a point $x \in S^4$ then $\pi^{-1}(x)$ is either finite or a projective line. So $\pi^{-1}{\cal D}$ has dimension at most $2$. Non-singular complex surfaces are always connected. So $\Sigma \setminus (\pi^{-1}{\cal D})$ is connected.

This is the desired contradiction.
\end{proof}

We next compute the Euler characteristic:

\begin{proposition}
Let $\Sigma$ be a non-singular cubic curve. Let $T$ denote the set
of triple points of the twistor fibration and ${\cal D}$
be the discriminant locus. We have:
\[ \chi({\cal D}) = -3 - \chi(T) \]
\label{eulerCharacteristic}
\end{proposition}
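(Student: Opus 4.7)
The plan is to exploit the stratification of $S^4$ induced by the multiplicity of the fibres of $\pi_{\restriction \Sigma}$, and then apply additivity of the Euler characteristic.

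First, I would observe that for a cubic polynomial in one variable the possible root structures are: three distinct roots, one double plus one simple root, or one triple root. (Two double roots is impossible for a degree-$3$ polynomial.) This gives a disjoint decomposition
\begin{equation*}
S^4 = (S^4 \setminus {\cal D}) \sqcup ({\cal D} \setminus T) \sqcup T,
\end{equation*}
and, taking preimages under $\pi_{\restriction \Sigma}$, a corresponding decomposition of $\Sigma$. Over each stratum the map $\pi_{\restriction \Sigma}$ is a finite-sheeted covering of known degree: three sheets over $S^4 \setminus {\cal D}$, two sheets over ${\cal D} \setminus T$ (one sheet carrying the simple root, one carrying the double root), and one sheet over $T$. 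Note that ${\cal D} \setminus T$ coincides with the set ${\cal D}^\prime$ of Definition \ref{smoothPointDefinition}, so the two-sheetedness over this stratum is essentially already established.

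Next, I would invoke additivity of Euler characteristic. Since $\Sigma$, ${\cal D}$ and $T$ are all real semialgebraic sets, they admit compatible triangulations, and $\chi$ is additive over the corresponding locally closed decomposition. For a finite-sheeted topological covering $\chi$ multiplies by the number of sheets. Combining these gives
\begin{equation*}
\chi(\Sigma) = 3\bigl(\chi(S^4) - \chi({\cal D})\bigr) + 2\bigl(\chi({\cal D}) - \chi(T)\bigr) + \chi(T).
\end{equation*}
Simplifying and substituting $\chi(S^4)=2$ and $\chi(\Sigma)=9$ (the standard value for a smooth cubic surface, which is the blow-up of $\CP^2$ at six points) yields
\begin{equation*}
9 = 6 - \chi({\cal D}) - \chi(T),
\end{equation*}
whence $\chi({\cal D}) = -3 - \chi(T)$ as required.

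The main obstacle is the justification that $\pi_{\restriction \Sigma}$ really is a topological covering of the asserted degree over each stratum, rather than just having the right fibre cardinality. Over $S^4 \setminus {\cal D}$ this is essentially the implicit function theorem; over ${\cal D} \setminus T = {\cal D}^\prime$ a similar argument using the holomorphicity of $F$ in the fibre variable (as exploited in the proof of Proposition \ref{orientabilityProposition}) identifies the double-root sheet with ${\cal D}^\prime$ as a smooth manifold, while the simple-root sheet is again a graph; over $T$ one likewise checks that a triple root varies continuously with the base point, using properness of $\pi$. A convenient alternative which avoids this issue altogether is to build a simultaneous semialgebraic triangulation of $S^4$ and $\Sigma$ adapted to the decomposition above and just count cells, so that the covering assertions become combinatorial identities between numbers of cells.
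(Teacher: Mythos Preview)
Your approach is essentially the paper's: stratify $S^4$ by fibre multiplicity, use additivity of $\chi$, and plug in $\chi(\Sigma)=9$. However, you have overlooked one stratum. A cubic polynomial in one variable can also vanish \emph{identically}, and this is exactly what happens on a twistor line of $\Sigma$. The projection of a twistor line lies in ${\cal D}$ (the discriminant certainly vanishes there) but not in $T$ (the fibre cardinality is infinite, not $1$) and not in ${\cal D}^\prime$ (there is no unique double point). So your claim that ${\cal D}\setminus T={\cal D}^\prime$ is false in general, and over such a point $\pi_{\restriction\Sigma}$ is not a two-sheeted cover but has a whole $\CP^1$ as fibre.

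The paper's proof handles this by letting $n$ be the number of twistor lines and writing
\[
\chi(\Sigma)=3\chi(S^4\setminus{\cal D})+2\chi({\cal D}^\prime)+\chi(T)+n\,\chi(\CP^1),
\]
with ${\cal D}^\prime={\cal D}\setminus(T\cup\{\text{twistor line images}\})$. When you expand this you get an extra $-2n$ from the $2\chi({\cal D}^\prime)$ term and an extra $+2n$ from the $n\,\chi(\CP^1)$ term, and these cancel. So your final formula is correct, but only by a cancellation you did not exhibit; the argument as written is incomplete for any cubic surface possessing twistor lines, which includes the Fermat cubic itself.
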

\begin{proof}
Let $n$ denote the number of twistor lines. $\pi_{\restriction \Sigma}$
is a 3 sheeted cover of $S^4$ branched over the discriminant locus. There are two points in $\Sigma$ over each point in ${\cal D}^\prime$, $1$ point over each triple point and a copy of $\CP^1$ for each twistor line. Hence by the additivity of the Euler characteristic:
\[ \chi(\Sigma) =3 \chi(S^4 \setminus {\cal D}) + 2\chi({\cal
D}^{\prime}) + \chi(T) + n \chi(\CP^1) \]
All non-singular cubics are diffeomorphic to $\CP^2 \#
6\ol{\CP^2}$. So $\chi(\Sigma)=9$.
\[ 9    =  6 - 3\chi({\cal D}) + 2(\chi({\cal D}) - \chi(T) -n) + \chi(T) + 2 n \]
The result follows.
\end{proof}

\begin{corollary}
The set ${\cal D}\setminus{\cal D}^\prime$ on a smooth non-singular surface $\Sigma$ of degree $d$ is always non-empty if $d$ is odd.
\end{corollary}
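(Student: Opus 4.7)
The plan is to derive a contradiction from a parity argument on Euler characteristics. Suppose, toward a contradiction, that ${\cal D} = {\cal D}^\prime$.

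First I would observe that this assumption already excludes twistor lines: over a twistor line the fibre polynomial vanishes identically, and such a base point cannot carry a unique double root of a degree-$d$ polynomial. Consequently the restriction $\pi|_\Sigma$ is a branched cover of $S^4$ with exactly $d$ preimages over every $x \in S^4 \setminus {\cal D}$ and exactly $d-1$ preimages (one double root together with $d-2$ simple roots) over every $x \in {\cal D}$.

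Next I would apply additivity of the Euler characteristic on the constructible stratification $S^4 = (S^4 \setminus {\cal D}) \sqcup {\cal D}$, weighted by preimage count, to obtain
\[
\chi(\Sigma) = d\,\chi(S^4 \setminus {\cal D}) + (d-1)\,\chi({\cal D}) = 2d - \chi({\cal D}).
\]
A standard Chern-class computation gives $\chi(\Sigma) = d^3 - 4d^2 + 6d$ for any smooth degree-$d$ surface in $\CP^3$, and this quantity is odd whenever $d$ is odd. The displayed identity then forces $\chi({\cal D})$ to be odd.

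On the other hand, by Proposition \ref{orientabilityProposition} the set ${\cal D} = {\cal D}^\prime$ is a smooth orientable surface; since ${\cal D}$ is compact and equals its smooth locus ${\cal D}^\prime$ under our assumption, it is a closed $2$-manifold and hence has even Euler characteristic. This is the desired contradiction. The only step that might require slight care is the Euler-characteristic additivity for constructible real algebraic sets and the verification that twistor lines have been ruled out, but these are both standard; everything else reduces to a clean parity count.
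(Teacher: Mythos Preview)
Your argument is correct and follows essentially the same route as the paper: assume $\mathcal{D}=\mathcal{D}'$, deduce that $\mathcal{D}$ is a closed orientable surface with even Euler characteristic, and then derive a contradiction from the parity of $\chi(\Sigma)=d^3-4d^2+6d$. The only difference is presentational: you write out the branched-cover count $\chi(\Sigma)=2d-\chi(\mathcal{D})$ explicitly and conclude that $\chi(\mathcal{D})$ must be odd, whereas the paper phrases the same step as ``$\chi(S^4)$ even and $\chi(\mathcal{D})$ even force $\chi(\Sigma)$ even''; you also make explicit that the hypothesis rules out twistor lines, which the paper folds into its contradiction assumption.
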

\begin{proof}
Suppose that $d$ is odd and that the twistor fibration contains no triple points or
twistor lines. The discriminant locus will then be smooth, compact and orientable. So its Euler characteristic must be even. 

The Euler characteristic of $S^4$ is even. Hence the Euler characteristic of a $d$ fold cover of $S^4$ branched along the discriminant locus must also be even. In particular the Euler characteristic of $\Sigma$ must be even.

On the other hand, the Euler characteristic of a degree $d$ complex surface is $d(6-d(4-d))$ (This can be proved using the adjunction formula and the fact that the Euler characteristic is equal to the top Chern number: see \cite{griffithsAndHarris}).
\end{proof}

In this section we have proved a number of basic topological facts about the discriminant locus, but much remains unanswered. For example can we obtain bounds on the number of connected components of ${\cal D}$ or on the number of triple points? One can find crude
bounds rather easily using the Milnor--Thom bound (\cite{milnor, thom}). This states that the sum of the Betti numbers of a real algebraic variety in $\R^m$ defined as the zero set of a finite number of polynomial equations of degree $k$ is less than or equal to $k(2k-1)^{m-1}$. Thus for a cubic surface the sum of the Betti numbers of ${\cal D}$ is bounded above by $12(23)^3=146004$ and the number of triple points is bounded above by $8(15)^3=27000$. One can surely do rather better! 

Note that for the related question of the number of twistor lines, we have a sharp bound: there are at most $5$ twistor lines. See \cite{armstrongSalamon} for a proof and a classification of cubic surfaces with $5$ twistor lines.

\section{The topology of singular surfaces}
\label{singularSurfacesSection}

The primary aim of this paper is to compute the topology of the discriminant locus of the Fermat cubic. But what does it actually mean to compute the topology of a space $X$? The very notion presupposes that one has a classification theorem for the candidate topologies and one wishes to identify the topology of $X$ within this classification. In order to compute the topology of the discriminant locus, therefore, we need some classification for the topology of singular surfaces. Since we expect that for generic cubics, the discriminant locus will have isolated singularities, let us focus on this case.

\begin{definition}
A topological space is a {\em real surface with isolated topological singularities} if it is homeomorphic to a finite CW-complex built using only $0$, $1$ and $2$ cells where precisely two faces meet at any given edge. This ensures that along the interior of the edges the space is locally homeomorphic to $\R^2$.

A {\em topological singularity} is a point on such a surface which is not locally homeomorphic to $\R^2$. We will sometimes abbreviate this to simply {\em singularity} when there is no danger of confusion with the algebraic notion of singularity.  We empahsise the distinction because we will find in practice that the triple points of the Fermat cubic are algebraically singular but not topologically singular.
\end{definition}

The topological classification of such spaces is straightforward but perhaps not very well known. We will show how to classify theses spaces by means of an associated graph which encodes the topology.  See \cite{fortuna2005effective} for an alternative, but equivalent, description for the case of surfaces in ${\mathbb{RP}}^3$.

Let us begin by describing the associated graph in some special cases, we will then explain more formally how to construct the graph.

For a non-singular compact real surface, the graph consists of a
single node labelled either $\Sigma_g$ or $\Theta_c$ with $g \in \N$ and $c \in \N^+$. $\Sigma_g$ is the label used for a connected orientable surface of genus $g$. $\Theta_n$ is the label used for a connected non-orientable surface with $c$ cross caps.

When $n$ smooth points of a singular real surface are glued
together at a point we add one extra node to the graph
representing the glue point. 
We join this node to the nodes representing the smooth parts of
the surface that have been glued
together.

\begin{example} The surface obtained by choosing two points on a
sphere, gluing one point to a torus and the other
to a Klein bottle has graph:

$\Sigma_1$---$\circ$---$\Sigma_0$---$\circ$---$\Theta_2$.
\end{example}
\begin{example}
If one takes three points on a sphere and glues them all
together, the resulting surface has graph
\chemfig{{\Sigma_0}~\circ}.
\end{example}

Let us now describe in general how to associate a graph
$\Gamma_\Delta$ to a cell decomposition $\Delta$. The point we wish
to emphasize is the algorithmic way in which one can
compute $\Gamma_\Delta$ from $\Delta$.

First, we need to understand the topology of the surface at the
singularities. Given a vertex $v$ of the cell decomposition we can
define a graph $\Gamma_v$ as follows: add a node to the graph
for each edge that ends at $v$;
for each face in the cell decomposition which has two edges that meet
at $v$ in its boundary,
connect the corresponding nodes in the $\Gamma_v$.
Since two faces meet at each edge, $\Gamma_v$ will consist
simply of a closed loops. Let $n_v$ denote the number
of loops. If $n_v\neq 0$, the surface is locally homeomorphic to
$n_v$ discs whose origins have all
been glued together with $v$ corresponding to the glue point. In
the case $n_v=0$ the surface consists of a single point.
Note that in the case $n_v=1$, the surface is locally
homeomorphic to $\R^2$.

We can now define the resolution of a singularity to be the
cell decomposition obtained by replacing the vertex with $n_v$
vertices each connected to one of the components of $\Gamma_v$.
This construction simply corresponds to ungluing the discs.

Given a cell decomposition $\Delta$ we now define the nodes of the
graph $\Gamma_\Delta$. The nodes of $\Gamma_\Delta$ are defined to be
the union of two sets $N_1$ and $N_2$. The set of nodes $N_1$ 
is given by the set of connected components of the topological space
$\Delta$ with its vertices removed. By resolving
the singularities of each connected component, we obtain
a compact smooth real surface. We add the data of its Euler
characteristic and orientation to each node in $N_1$.

The nodes $N_2$ for $\Gamma_\Delta$ are defined to be
the topological singularities. That is they correspond
to the vertices of $\Delta$ with $n_v \neq1$. There is no
additional data associated with these nodes.

The links in $\Gamma_\Delta$ are defined as follows: we add one link to the node $v \in N_2$ for each connected
component of $\Gamma_v$; this link connects the node $v$
to the node in $N_1$ containing the edges and faces of
associated with the connected component. There are no
other links.

With the obvious notion of graph homomorphism for this category of graphs, the graph $\Gamma_\Delta$ completely classifies the surface up to homeomorphism. This follows from the classification theorem for closed non-singular surfaces.

We can now state the aim of this paper more clearly. It
is to compute the graph that encodes the topology of the discriminant locus of the Fermat cubic.

\section{The discriminant locus of the Fermat cubic}
\label{fermatSection}

The discriminant locus defines a surface in $\R^4 \cup \{\infty\}$ which we can view as the world sheet swept out as a curve moves through $\R^3$. A first step to understanding the discriminant locus
is to generate an animation of this curve. One hopes that by careful study of the resulting curve we should be able to piece together
the topology of the surface. We will do this in two stages --- firstly we will calculate the topology by a simple visual examination of the curve. We will then show how the results of this visual examination can be rigorously justified.

The ease with which one can comprehend the animation of the discriminant locus depends crucially upon the choice of coordinates for $\R^4$. The ideal coordinates for viewing the Fermat cubic are
not immediately apparent. As a preliminary step to choosing good
coordinates, let us describe the conformal symmetries of the Fermat cubic. This will surely guide our choice of coordinates.

The Fermat cubic is defined by the equation
\[ z_1^3 + z_2^3 + z_3 ^3 + z_4^3 = 0.\]
There is an obvious $S^4$ action given by permuting the
coordinates. One can
also make transformations such as $(z_1,z_2,z_3,z_4) \mapsto
(\omega_1 z_1, \omega_2 z_2, \omega_3 z_3, \omega_4 z_4)$ where
each $\omega_i$ is a cube root of unity. These generate all the
projective symmetries of the Fermat cubic.

It is easy now to check that the conformal symmetries of the Fermat cubic are generated by $(z_1,z_2,z_3,z_4) \mapsto (z_3,z_4,z_1,z_2)$
and $(z_1,z_2,z_3,z_4) \mapsto (\omega z_1, \omega^2 z_2,
z_3, z_4)$ where $\omega=e^{\frac{2\pi i}{3}}$.
Thus the group of conformal symmetries is isomorphic to $\Z_3 \times \Z_2$.

The next observation to make is that the Fermat cubic has three
twistor lines. This is easily checked using the well known explicit description for the $27$ lines on the Fermat cubic.

If one then chooses coordinates for $S^4$ such that the twistor lines are at $(-1,0,0,0)$, $(1,0,0,0)$ and $\infty$ then by Proposition
\ref{algebraicProposition} the equations for the discriminant locus are equations of degree $8$ and the equations for the triple points are of degree $4$. The equations for the triple points are sufficiently simple for Mathematica to be able to solve them. This demonstrates the practical value of Proposition \ref{algebraicProposition}.

If we then transform back to the standard coordinates we have:
\begin{lemma}
Identifying $S^4$ with $\H \cup \{ \infty \}$, the triple points
for the Fermat cubic have coordinates
$-\frac{1}{2} j + \frac{\sqrt{3}}{2}k$,
$\frac{1}{2}j + \frac{\sqrt{3}}{2}k$,
$-j$,
$j$,
$-\frac{1}{2}j -\frac{\sqrt{3}}{2}k$ and
$\frac{1}{2}j, -\frac{\sqrt{3}}{2}k.$

The twistor lines for the Fermat cubic lie above the points
$-1$, $\frac{1}{2} + \frac{\sqrt{3}}{2}i$, $\frac{1}{2}
-\frac{\sqrt{3}}{2}i$.
\label{triplePointLemma}
\end{lemma}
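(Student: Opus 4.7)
The proof is a computer-algebra calculation made tractable by Proposition \ref{algebraicProposition}; the plan has two parts, identifying the twistor lines and then solving for the triple points.

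For the twistor lines I would begin from the standard enumeration of the $27$ lines on the Fermat cubic: they split into three families of $9$, indexed by the three pairings of $\{1,2,3,4\}$, each line in a family specified by a pair of cube roots of $-1$. Twistor fibres are exactly the complex lines in $\CP^3$ invariant under the antiholomorphic involution $j$, which in coordinates acts as $[z_1,z_2,z_3,z_4]\mapsto[-\ol{z_2},\ol{z_1},-\ol{z_4},\ol{z_3}]$, so the identification reduces to a short case analysis family-by-family. Only the pairing $\{1,3\}|\{2,4\}$ produces $j$-invariant lines, and it produces exactly three of them, parameterised by $\alpha\in\{-1,-\omega,-\omega^2\}$ with $\omega=e^{2\pi i/3}$. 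Projecting each down to $\H\cup\{\infty\}$ via $[q_1,q_2]_{\sim_\H}\mapsto q_2^{-1}q_1$ then yields the three twistor-line locations claimed in the lemma.

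For the triple points I would pick a quaternionic M\"obius transformation $\varphi$ sending the three twistor-line points to $-1$, $1$ and $\infty$ and lift it to a projective automorphism of $\CP^3$ using the matrix recipe in Section \ref{reviewSection}. After pulling the Fermat polynomial back through $\varphi$, Proposition \ref{algebraicProposition} and its triple-point corollary apply in their stronger form with a twistor line at $\infty$: the triple-point locus is now cut out by two complex polynomials of degree $4$ in the real coordinates $\vec{x}$, rather than $8$. I would generate these polynomials explicitly via the trivialisation $\psi$ of the twistor fibration (expand the pulled-back cubic as a polynomial $a\lambda^3+b\lambda^2+c\lambda+d$ in $\lambda$ with coefficients polynomial in $\vec{x}$, then impose $b^2-3ac=0$ and $9ad-bc=0$), hand the resulting system to Mathematica, and apply $\varphi^{-1}$ to transport the finitely many solutions back to the standard coordinates on $\H\cup\{\infty\}$.

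The main obstacle is purely one of feasibility of the symbolic solve. In the original coordinates the triple-point equations would be two polynomials of degree $8$ in four real variables, each with several hundred monomials, well beyond what Mathematica can solve directly; the halving of the degree produced by Proposition \ref{algebraicProposition} is precisely what brings the calculation inside the reach of the CAS. A minor side verification is that the chosen $\varphi$ really does arise from a conformal map of $S^4$, i.e.\ that its lift to $\CP^3$ has the complex-conjugation symmetries displayed in Section \ref{reviewSection}; this is automatic from the construction of $\varphi$ as a quaternionic M\"obius transformation.
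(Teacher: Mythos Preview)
Your proposal is correct and follows essentially the same route as the paper: identify the three twistor lines among the $27$ via $j$-invariance, move them by a quaternionic M\"obius transformation to $-1$, $1$, $\infty$, invoke Proposition~\ref{algebraicProposition} and its triple-point corollary to obtain degree-$4$ equations, solve those in Mathematica, and pull the solutions back. The paper gives fewer details (it does not name the $\{1,3\}\mid\{2,4\}$ family or spell out the $b^2-3ac$, $9ad-bc$ conditions), but the argument is the same.
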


Notice that all of these points lie on the unit $3$-sphere. This
tells us that we can make a quaternionic M\"obius transformation to transform these points so they all lie in the plane $x_1=0$. Then, if we produce an animation with time coordinate $x_1$ we will be able to view all these singular points simultaneously at time
$x_1=0$.

One such M\"obius transformation is $q \mapsto
(q-k)^{-1}(-q-k)$. It transforms the six triple points to the
following points:
$-(2 + \sqrt3) i$, $(2 + \sqrt3) i$, $-i$, $i$, $-(2 -
\sqrt3)i$, $(2-\sqrt3)i$. Note that they all lie on the $j$
axis. The points corresponding to the twistor lines are mapped
to $-\frac{\sqrt3}{2}j - \frac{1}{2}k$, $+\frac{\sqrt3}{2}j -
\frac{1}{2}k$
and $k$. They lie in a circle in the $j$, $k$ plane.

\begin{figure}[htbp]
\begin{tabular}{ll}
The discriminant locus at time $x^\prime_1=0$: & \\
\includegraphics[scale=0.45]{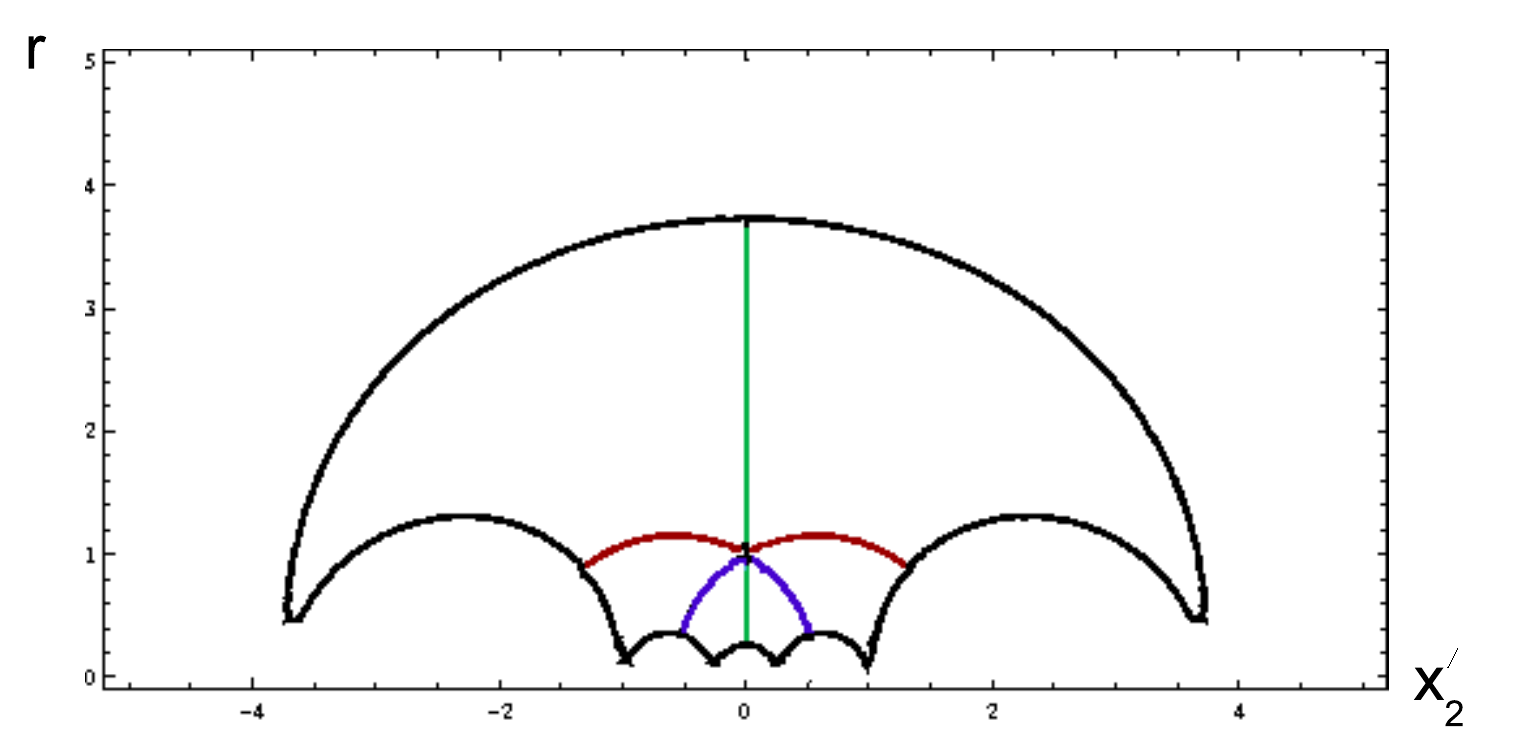} & \\
\includegraphics[scale=0.45]{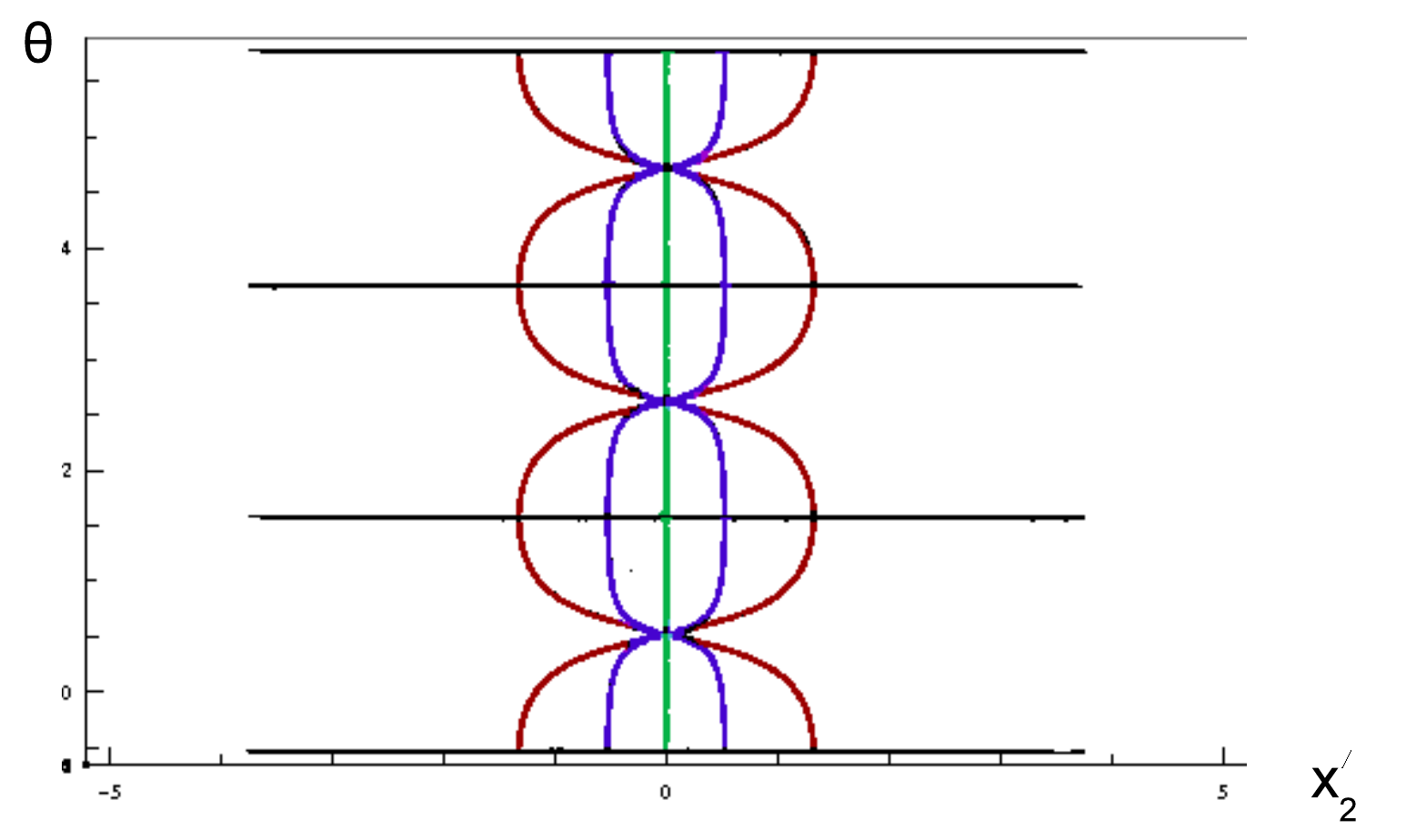} 
& \includegraphics[scale=0.45]{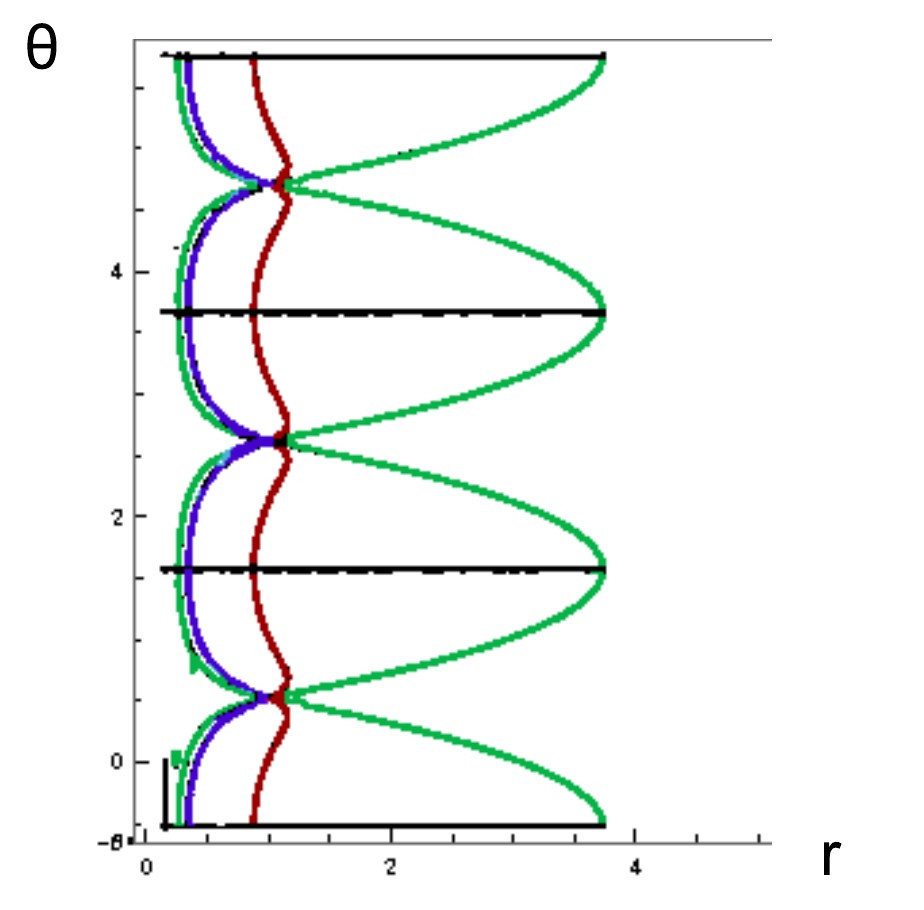} \\
The discriminant locus at time $x^\prime_1=0.02$: & \\
\includegraphics[scale=0.45]{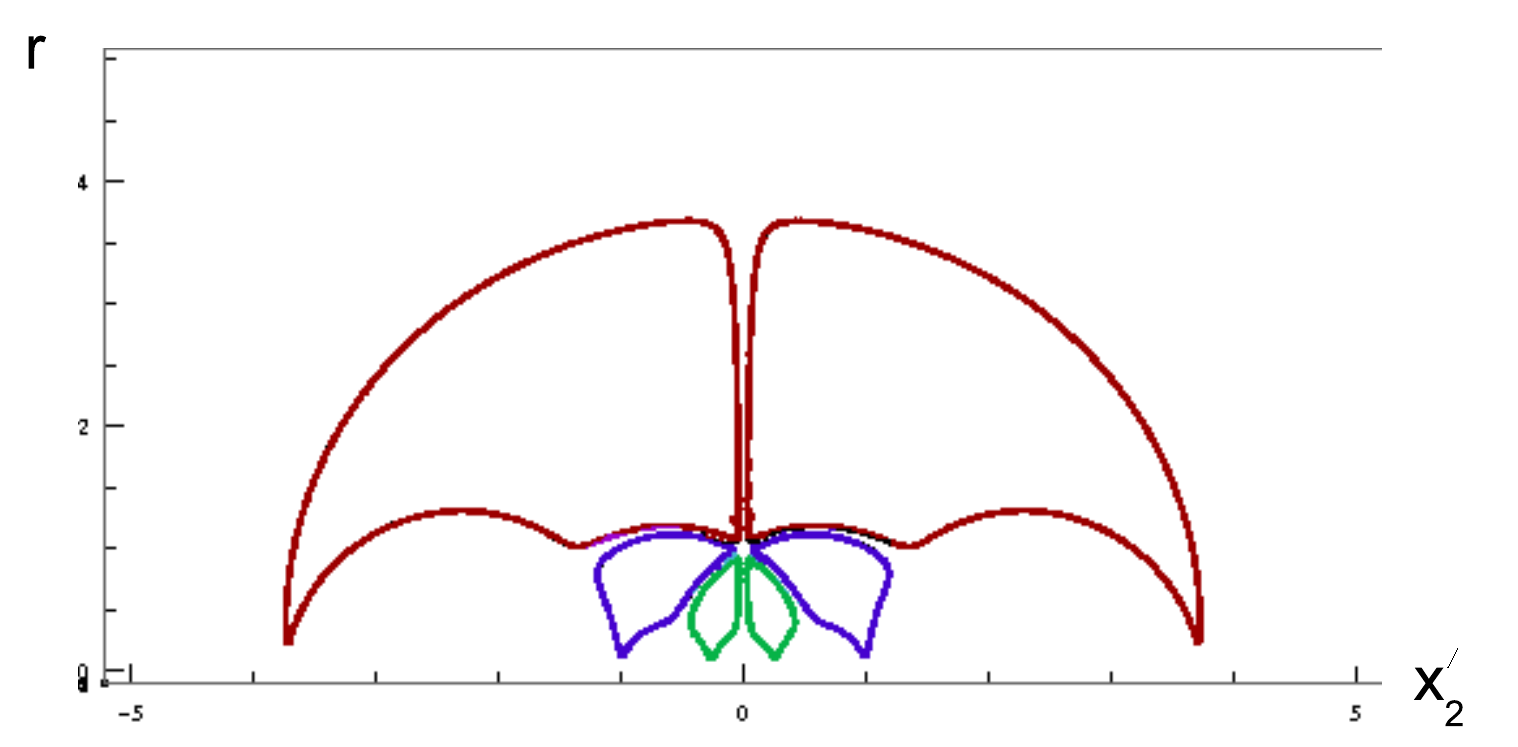} & \\
\includegraphics[scale=0.45]{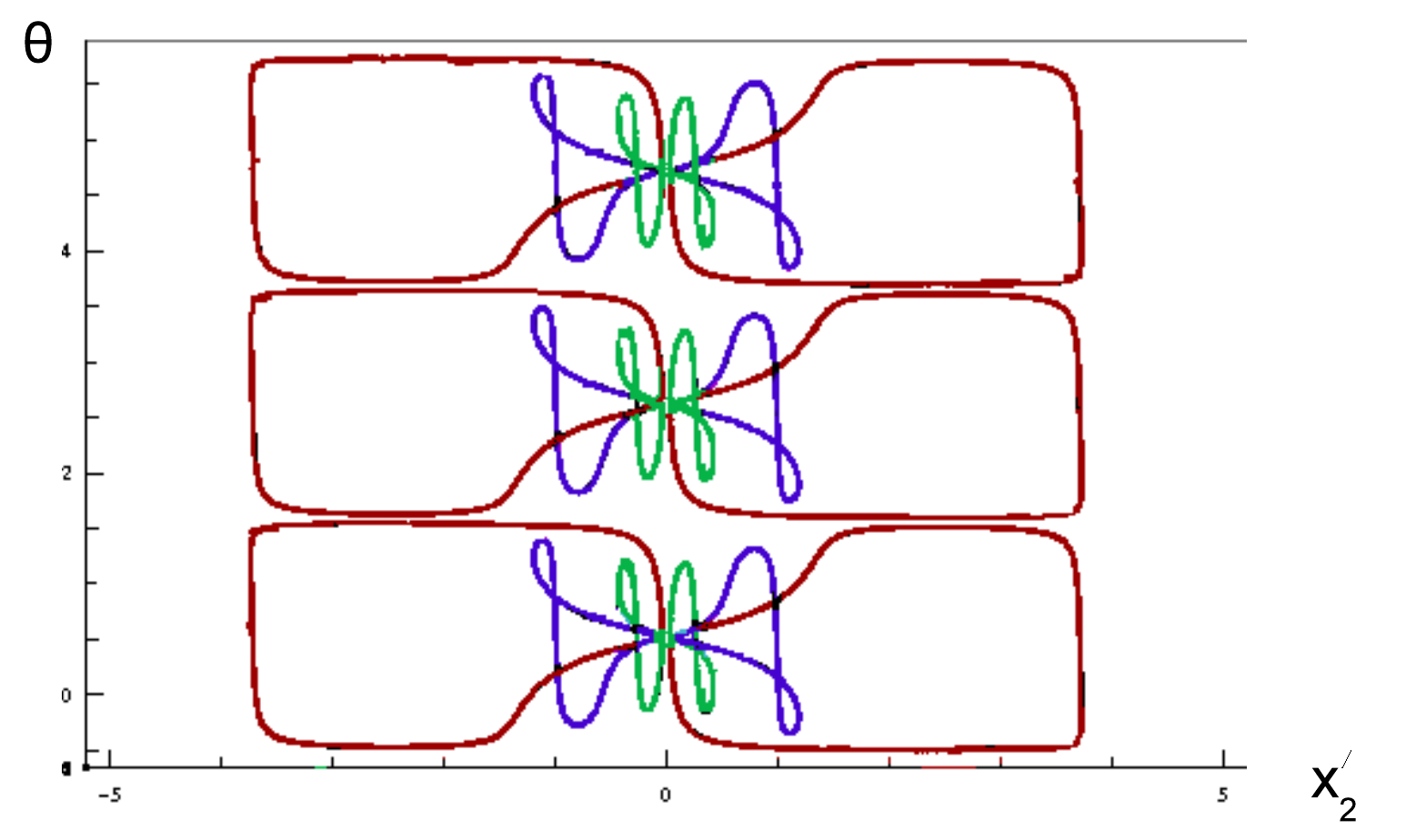} &
\includegraphics[scale=0.45]{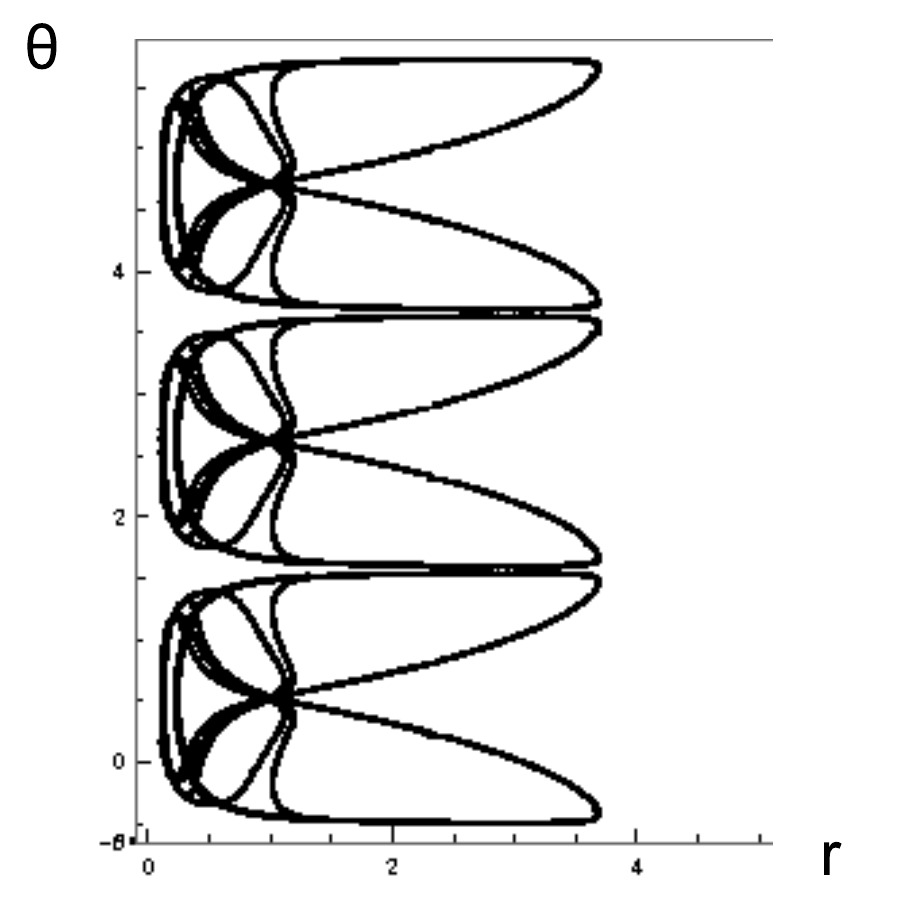} \\
\end{tabular}
\caption{Three views of the discriminant locus at two different
times}
\label{fermatDiscriminantLocus}
\end{figure}

\begin{figure}[htbp]
\label{fermatSchematic}
\includegraphics[scale=1]{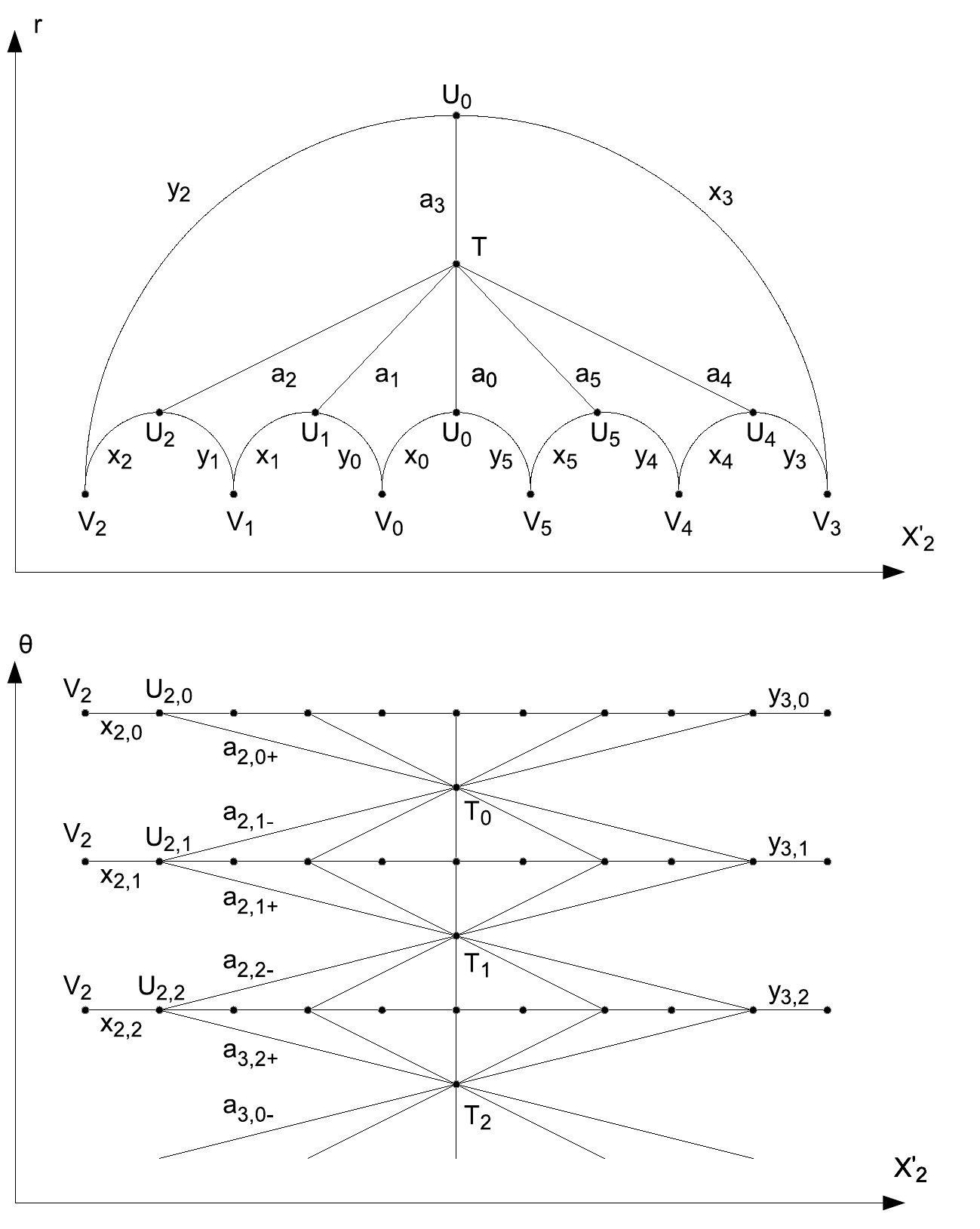}
\caption{Labels for the vertices and edges of the cell decomposition}
\end{figure}

Let us use the notation $\vec{x^\prime}$ for these transformed
coordinates. In these coordinates, the animation of the discriminant locus becomes considerably simpler.

One feature that stands out when using these coordinates is an apparent rotation symmetry given by rotating through $120$ degrees about the $i$ axis. This suggests that it might be useful to introduce cylindrical polar coordinates. We write $x^\prime_3=r \cos \theta$ and $x^\prime_4=r \sin \theta$. 

In Figure \ref{fermatDiscriminantLocus} we have used these
cylindrical polar coordinates to plot the discriminant locus at each of the times $x^\prime_1=0$ and $x^\prime_1=0.02$. We
have arranged the pictures so that the reader can (mentally or
physically) fold the page to obtain a three dimensional view. We have omitted the points where $r=0$ since they are a little confusing since points with $r=0$ need to be identified appropriately in polar coordinates. If we had included the points there would be six vertical lines representing the triple points running across the front of the figure at time $x^\prime_1=0$.

For each time point we have coloured the plots to indicate which
parts of the different views correspond. We have not coloured the view of the $(r, \theta)$ plane at time $0.02$ as the non-transverse intersections make it difficult to work out how the curves correspond to the other figures. Note that there is no relationship between the colours used in the colours used at times $x^\prime_1=0$ and $x^\prime_1=0.02$.

The first thing to notice about Figure \ref{fermatDiscriminantLocus} is the translation symmetry on the $\theta$ axis (corresponding to 120 degree rotations in the $x^\prime$ coordinates). If one simplifies the expression for the discriminant locus in these coordinates one obtains an expression only involving $\theta$ via the functions $\cos(3 \theta)$ and $\sin(3 \theta)$. This proves the visually apparent symmetry is a genuine symmetry of the discriminant locus.

This is surprising since this symmetry {\em does not} arise from the
group of conformal symmetries $\Z_3 \times \Z_2$ acting on the
Fermat cubic. We conclude that the discriminant locus is more symmetrical than the Fermat cubic itself. We will discover as we continue our study that it has yet more symmetries.

Next note that at small positive $x^\prime_1>0$ times the discriminant locus consists of a number of disjoint loops. We have illustrated this behaviour with only one time point $x^\prime_1=0.02$, but it appears to be true for all positive times when one plots an animation. As time progresses, each of these loops shrinks to a point and then disappears. There is a time symmetry in these coordinates which ensures that at small negative times, one similarly sees a number of disjoint loops which shrink to a point and then disappear as time $x^\prime_1$ decreases.

Since the worldsheet of a loop shrinking to a point and disappearing is homeomorphic to $\R^2$, this suggests that we have found a cell decomposition of the discriminant locus.

The vertices and edges of the cell decomposition are given by the curve at time $x^\prime_1=0$. Plotting the curve at this time does indeed show it to be singular: the singularities are the vertices of our cell decomposition, the smooth curves are the edges. The worldsheets of the loops at positive and negative times give the faces of our cell decomposition.

Note that the view of the $(x^\prime_2,r)$ plane clearly shows $6$
distinct loops at time $t=0.02$. As time progresses these
loops shrink to points and disappear. Taking into account the
$\Z_3$ symmetry, this gives a total of $18$ loops for
positive times. There is also a time symmetry given by
$(x_1^\prime,x_2^\prime, r, \theta) \mapsto
(-x_1^\prime,-x_2^\prime,r,\theta)$. Thus we have identified a
cell decomposition with $36$ faces.

So, by means of visual inspection, we have identified a cell decomposition of the discriminant locus. We have two tasks remaining: the first is to compute the topology of the discriminant locus from the cell decomposition; the second is to rigorously justify the existence of the cell decomposition. The first task is reasonably simple. We postpone the second to the next section.

\begin{proposition}
\label{topologyCalculation}
Assuming the cell decomposition of the discriminant locus of the Fermat cubic obtained by visual inspection is correct, its topology is encoded by the graph:
\[
\begin{tikzpicture}[scale=1]
\node (n) at (1,1) {$\Sigma_4$} ;
\node [circle, draw, fill=white,inner sep=0pt, minimum width=4pt] (a) at (0,0) {};
\node [circle, draw, fill=white,inner sep=0pt, minimum width=4pt] (b) at (1,0) {};
\node [circle, draw, fill=white,inner sep=0pt, minimum width=4pt] (c) at (2,0) {};
\draw [double] (a) -- (n); 
\draw [double] (b) -- (n); 
\draw [double] (c) -- (n); 
\end{tikzpicture}
\]
\end{proposition}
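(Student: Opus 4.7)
The plan is to apply the algorithm from Section \ref{singularSurfacesSection} to the cell decomposition $\Delta$ identified by visual inspection, extracting the graph $\Gamma_\Delta$ step by step.

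First I would enumerate the $0$- and $1$-cells. The vertices are the singular points of the curve ${\cal D} \cap \{x^\prime_1 = 0\}$. Among these, Lemma \ref{triplePointLemma} together with the subsequent coordinate change identifies two symmetry classes of candidates: the six triple points on the $x^\prime_2$-axis, and the three projections of twistor lines on a circle in the $(x^\prime_3, x^\prime_4)$-plane. Using the explicit degree-$8$ defining polynomials from Proposition \ref{algebraicProposition} restricted to the slice $x^\prime_1 = 0$, I would confirm that the singular set of the slice curve consists exactly of these nine points (equivalently, that no further self-intersections appear). With $F = 36$ already identified, the combined $\Z_3$-rotation in $\theta$ and the time-reversal $x^\prime_1 \mapsto -x^\prime_1$ cut the problem down to a fundamental domain with $6$ faces; I would count edges there and extend by symmetry, verifying $V - E + F = -9$, which matches $\chi({\cal D}) = -3 - \chi(T) = -9$ from Proposition \ref{eulerCharacteristic} (using $\chi(T) = 6$).

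Second, at each vertex I would compute $n_v$ by forming $\Gamma_v$ from the local face-edge incidence data visible in Figure \ref{fermatDiscriminantLocus}. I expect $n_v = 1$ at each of the six triple points, so that they are topologically smooth despite being algebraically singular as anticipated in Section \ref{singularSurfacesSection}, and $n_v = 2$ at each of the three twistor-line points, corresponding to two sheets of ${\cal D}$ meeting at a single topologically singular point above each twistor line.

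Third, I would identify the smooth component. Resolving the three $n_v = 2$ singularities raises the Euler characteristic by $3$, giving $-6$ for the resolved closed surface. Orientability follows from Proposition \ref{orientabilityProposition} applied to ${\cal D}^\prime$ and extends across the topologically smooth triple points. Provided the resolved surface is connected, the classification of closed orientable surfaces forces it to be $\Sigma_4$, and the $n_v = 2$ data for each twistor-line singularity together with connectivity of $\Sigma_4$ produce the three double edges of the stated graph.

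The main obstacle will be the local analysis at the vertices: carefully distinguishing topologically smooth triple points ($n_v = 1$) from topologically singular twistor-line points ($n_v = 2$) requires reading face-edge incidences off of the worldsheet picture, and this is exactly the step that the visual inspection leaves to be justified rigorously in the subsequent sections. A secondary difficulty is ruling out disconnections of the resolved surface — for example, $\Sigma_3 \sqcup \Sigma_2$ also has Euler characteristic $-6$ — which amounts to exhibiting a face-edge-face path between arbitrary faces inside a fundamental domain and then extending by the symmetry group.
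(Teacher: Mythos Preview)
Your overall strategy matches the paper's almost exactly: enumerate the cells, compute the link graphs $\Gamma_v$ at each vertex to determine which vertices are topologically singular, check connectivity of the complement of the singularities via face--edge--face paths, invoke Proposition~\ref{orientabilityProposition} for orientability, and finish with the Euler characteristic.

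There is, however, a concrete factual gap in your vertex count. You expect the singular set of the slice curve $\{x^\prime_1=0\}\cap{\cal D}$ to consist of exactly the nine special points (six triple points and three twistor-line projections), and you plan to ``confirm that \ldots no further self-intersections appear.'' In fact the visual cell decomposition has $27$ vertices, not $9$: in addition to the triple points $V_i$ ($i=0,\ldots,5$) and the twistor-line points $T_j$ ($j=0,1,2$), there are $18$ further ordinary crossings $U_{i,j}$ of the slice curve. These $U_{i,j}$ turn out to have $n_{U_{i,j}}=1$, so they do not alter the set $N_2$ of topological singularities, but they are genuine vertices of $\Delta$ and must be included in the cell count. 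With them present the tally is $27$ vertices, $72$ edges, $36$ faces, giving $\chi=-9$; your proposed check $V-E+F=-9$ would fail with only $9$ vertices unless you also undercount edges.

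Once you correct the vertex set, everything else you wrote goes through and coincides with the paper: the paper explicitly draws $\Gamma_{U_{i,j}}$ (a single $4$-cycle, so $n_v=1$), $\Gamma_{V_i}$ (a single $6$-cycle, so $n_v=1$ --- the triple points are topologically smooth as you anticipated), and $\Gamma_{T_j}$ (two disjoint $6$-cycles, so $n_v=2$). Connectivity is established exactly as you suggest, by exhibiting face--edge--face paths inside $\Gamma_{V_i}$ and $\Gamma_{T_j}$ that link all $f^\pm_{i,j}$. The genus then follows from $(2-2g)-3=-9$.
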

\begin{proof}
We label the edges and vertices of the cell decomposition as shown in Figure \ref{fermatSchematic}.

In the top part of Figure \ref{fermatSchematic} we have shown how to associate labels for the edges and vertices seen in the view of the $(x_2^\prime,r)$ plane at time $x_1^\prime=0$. We have used a more schematic representation of the decomposition than that shown in Figure \ref{fermatDiscriminantLocus}, but the relationship between the pictures should be obvious. We have used lower case letters to indicate edges. We have used upper case letters to indicate vertices. We have used indices from $0$ to $5$ to correspond to the approximate rotational symmetry about the point $(0,1)$ in the $(x_2^\prime,r)$ plane. Our numbering starts at the downward pointing vertical and then moves clockwise.

Since the $(x_2^\prime,r)$ plane is only two dimensional, some of the edges and vertices of our cell decomposition coincide when viewed in the $(x_2^\prime,r)$ plane. We have shown in the lower part of Figure \ref{fermatSchematic} how to add a second index to the edges and vertices to indicate their $\theta$ coordinate. In summary then our edges and vertices can be enumerated as follows:
\begin{itemize}
\item Edges $a_{i,j} \quad i \in \{0,1,2,3,4,5\}, \quad j \in \{0+,1-,1+,2-,2+,0-\}$;
\item Edges $x_{i,j} \quad i \in \{0,1,2,3,4,5\}, \quad j \in \{0,1,2\}$;
\item Edges $y_{i,j} \quad i \in \{0,1,2,3,4,5\}, \quad j \in \{0,1,2\}$;
\item Vertices $U_{i,j} \quad i \in \{0,1,2,3,4,5\}, \quad j \in \{0,1,2\}$;
\item Vertices $V_i \quad i \in \{0,1,2,3,4,5\}$;
\item Vertices $T_j \quad j \in \{0,1,2\}$.
\end{itemize}
Notice that we only have $6$ vertices of the form $V_i$ since when $r=0$ we must identify points that only differ by the value of $\theta$.

We label the faces of our cell decomposition $f^\pm_{i,j}$ with $i \in \{0, 1, 2, 3, 4, 5\}$ and $j \in \{0, 1, 2\}$. Here the plus or minus indicates whether the face corresponds to positive times $x_1^\prime > 0$ or to negative times. The $i$ index indicates the sector of the $(x_2^\prime,r)$ plane and the $j$ indicates the values of $\theta$ in accordance with the conventions used for edges and vertices.

An examination of Figure \ref{fermatDiscriminantLocus} allows us to write down the boundary of each face:
\begin{eqnarray*}
\partial (f^+_{i,j}) & = & x_{i,j+1} + a_{i,(j+1)-}+a_{i+1,j+} + y_{i,j} \\
\partial (f^-_{i,j}) & = & x_{i,j} + a_{i,j+} + a_{i+1,(j+1)-} + y_{i,j+1}
\end{eqnarray*}
In the formulae above, arithmetic in indices is performed modulo $6$ when working with the $i$ indices and modulo $3$ when working with the $j$ indices. We will use modular arithmetic for $i$ and $j$ indices for the rest of the proof without further comment.

One can see immediately from the above formulae for the faces in our cell decomposition that precisely two faces meet at each edge. Thus the cell decomposition describes a surface with isolated topological singularities.

We compute the local topology at a vertex $v$ by drawing the graph $\Gamma_v$ as described in Section \ref{topologicalSection}. The graph $\Gamma_{U_{i,j}}$ is:
\[
\begin{tikzpicture}[scale=1.5]
\node (i) at (1,0) {$x_{i,j}$} ;
\node (j) at (0,1) {$a_{i,j-}$};
\node (k) at (-1,0) {$y_{i-1,j}$};
\node (l) at (0,-1) {$a_{i,j+}$};
\draw [-] (i) -- node[right] {$f_{i,j-1}^+$} (j);
\draw [-] (j) -- node[left] {$f_{i-1,j-1}^-$}(k);
\draw [-] (k) -- node[left] {$f_{i-1,j}^+$}(l);
\draw [-] (l) -- node[right] {$f_{i,j}^-$}(i);
\end{tikzpicture}
\]
The graph $\Gamma_{V_i}$ is:
\[
\begin{tikzpicture}[scale=1]
\node (a) at (4,1.73205080757) {$x_{i,0}$} ;
\node (b) at (3,3.46410161514) {$y_{i,1}$};
\node (c) at (1,3.46410161514) {$x_{i,2}$};
\node (d) at (0,1.73205080757) {$y_{i,0}$};
\node (e) at (1,0) {$x_{i,1}$};
\node (f) at (3,0) {$y_{i,2}$};
\draw [-] (a) -- node[right] {$f_{i,0}^-$} (b);
\draw [-] (b) -- node[above] {$f_{i,1}^+$} (c);
\draw [-] (c) -- node[left] {$f_{i,2}^-$} (d);
\draw [-] (d) -- node[left] {$f_{i,0}^+$} (e);
\draw [-] (e) -- node[below] {$f_{i,1}^-$} (f);
\draw [-] (f) -- node[right] {$f_{i,2}^+$} (a);
\end{tikzpicture}
\]
The graph $\Gamma_{T_{j}}$ is:
\[
\begin{tikzpicture}[scale=1]
\node (a1) at (4,1.73205080757) {$a_{0,j+}$} ;
\node (b1) at (3,3.46410161514) {$a_{1,(j+1)-}$};
\node (c1) at (1,3.46410161514) {$a_{2,j+}$};
\node (d1) at (0,1.73205080757) {$a_{3,(j+1)-}$};
\node (e1) at (1,0) {$a_{4,j+}$};
\node (f1) at (3,0) {$a_{5,(j+1)-}$};
\draw [-] (a1) -- node[right] {$f_{0,j}^-$} (b1);
\draw [-] (b1) -- node[above] {$f_{1,j}^+$} (c1);
\draw [-] (c1) -- node[left] {$f_{2,j}^-$} (d1);
\draw [-] (d1) -- node[left] {$f_{3,j}^+$} (e1);
\draw [-] (e1) -- node[below] {$f_{4,j}^-$} (f1);
\draw [-] (f1) -- node[right] {$f_{5,j}^+$} (a1);
\node (a2) at (10,1.73205080757) {$a_{0,(j+1)-}$} ;
\node (b2) at (9,3.46410161514) {$a_{1,j+}$};
\node (c2) at (7,3.46410161514) {$a_{2,(j+1)-}$};
\node (d2) at (6,1.73205080757) {$a_{3,j+}$};
\node (e2) at (7,0) {$a_{4,(j+1)-}$};
\node (f2) at (9,0) {$a_{5,j+}$};
\draw [-] (a2) -- node[right] {$f_{0,j}^+$} (b2);
\draw [-] (b2) -- node[above] {$f_{1,j}^-$} (c2);
\draw [-] (c2) -- node[left] {$f_{2,j}^+$} (d2);
\draw [-] (d2) -- node[left] {$f_{3,j}^-$} (e2);
\draw [-] (e2) -- node[below] {$f_{4,j}^+$} (f2);
\draw [-] (f2) -- node[right] {$f_{5,j}^-$} (a2);
\end{tikzpicture}
\]

We deduce that the discriminant locus has precisely three  topological singularities $T_0$, $T_1$ and $T_2$ corresponding to the twistor lines. At these singularities, the discriminant locus is locally homeomorphic to two discs glued together at a point.

Since the faces $f_{i,j}^+$ and $f_{i,j}^-$ lie in a connected component component of the graph $\Gamma_{V_i}$, these faces can be connected via a path that avoids all vertices. The graph $\Gamma_{V_i}$ also shows that the faces $f_{i,j_1}^\pm$ and $f_{i,j_2}^\pm$ can be connected by such a path for all $j_1$ and $j_2$. Similarly the graph of $\Gamma_{T_j}$ shows that the faces $f_{i_1,j}^\pm$ and $f_{i_2,j}^\pm$ can be connected by such a path for all $i_1$ and $i_2$. We deduce that all faces can be connected by a path that avoids the singularities.

We know from Proposition \ref{orientabilityProposition} that discriminant locus is orientable away from the triple points and twistor lines. We deduce from this and the considerations above that the topology of the discriminant locus is described by a graph of the form:
\[
\begin{tikzpicture}[scale=1]
\node (n) at (1,1) {$\Sigma_g$} ;
\node [circle, draw, fill=white,inner sep=0pt, minimum width=4pt] (a) at (0,0) {};
\node [circle, draw, fill=white,inner sep=0pt, minimum width=4pt] (b) at (1,0) {};
\node [circle, draw, fill=white,inner sep=0pt, minimum width=4pt] (c) at (2,0) {};
\draw [double] (a) -- (n); 
\draw [double] (b) -- (n); 
\draw [double] (c) -- (n); 
\end{tikzpicture}
\]
Our cell decomposition contains $27$ vertices, $72$ edges and $36$ faces
so has Euler characteristic $-9$. One can alternatively calculate this using Lemma \ref{triplePointLemma} and Proposition \ref{eulerCharacteristic}. The Euler characteristic of the singular surface associated with the graph above is $(2-2g)-3$. So $g=4$.
\end{proof}

\section{Symmetry of the discriminant locus}
\label{symmetrySection}

The first step towards a rigorous justification of the above results is to find a simpler expression for the discriminant locus. To do this we need to exploit all of its symmetries.

Assuming that our topological calculation above is correct, one sees that the image of the twistor lines on the Fermat cubic's discriminant locus can be identified entirely in terms of the geometry of the discriminant locus without reference to the Fermat cubic. They are simply the points of the discriminant locus that are not locally homeomorphic to $\R^2$. Thus any conformal symmetry of the discriminant locus must preserve the $3$ points corresponding to the twistor lines.

Similarly one can identify the triple points as the points of the discriminant locus that are not smoothly embedded in $S^4$ yet are locally homeomorphic to $\R^2$. Thus any conformal symmetry of the discriminant locus must preserve the $6$ triple points. We can use this to put a bound on the size of the group of conformal symmetries of the discriminant locus of the Fermat cubic.

\begin{lemma}The group, $G$, of conformal symmetries that permute the six triple points of the Fermat cubic and its three twistor lines is isomorphic to $D_3 \times D_6$.
\end{lemma}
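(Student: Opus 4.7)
The plan is to set up a restriction homomorphism $\rho=(\rho_A,\rho_C)\colon G\to D_6\times D_3$ and to show it is an isomorphism (the ordering $D_3\times D_6$ in the statement differs only nominally). In the transformed coordinates of Lemma~\ref{triplePointLemma}, the six triple points lie on the great circle $A$ obtained by adjoining $\infty$ to the $i$-axis, and the three twistor points lie on the unit great circle $C$ in the orthogonal $(j,k)$-plane. The two circles are disjoint. Since each set contains more than three points, any $g\in G$ must preserve $A$ and $C$ setwise, so $\rho(g):=(g|_A,g|_C)$ is a well-defined pair of M\"obius transformations of $A$ and $C$.

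To bound the image, I would parametrise $A$ via the Lorentz model of the conformal group and verify that the six triple points are evenly distributed at angular positions $k\cdot 60^\circ$ in the natural null-cone parameter. Their M\"obius automorphism group on $A$ is therefore exactly $D_6$. Likewise the three twistor points form the vertices of an equilateral triangle on $C$, with M\"obius automorphism group $D_3$, so $\rho(G)\subseteq D_6\times D_3$.

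For injectivity, any $g\in\ker\rho$ fixes both $A$ and $C$ pointwise, since a M\"obius transformation of $S^1$ is determined by its values on three points. The disjoint circles correspond to $(2,1)$-subspaces $V_A,V_C\subset\R^{5,1}$ with $V_A\cap V_C=\{0\}$, whence $V_A+V_C=\R^{5,1}$. A direct computation in the Lorentz model then shows that only the identity of $\mathrm{Conf}^+(S^4)$ can fix both great circles pointwise.

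For surjectivity, I would realise a generating set of $D_6\times D_3$ by explicit orientation-preserving conformal symmetries: the $120^\circ$ rotation in the $(j,k)$-plane yields the $\Z_3$ generator of $D_3$; a reflection of the $(j,k)$-plane through one twistor point, composed with $x_1\mapsto -x_1$ to restore orientation, gives a reflection generator of $D_3$. For $D_6$, the Lorentz rotation by $60^\circ$ in the $(y_2,y_5)$-plane fixes every basis vector of $V_C$ and hence fixes $C$ pointwise, while rotating the null circle of $V_A$ by $60^\circ$ and cyclically permuting the six triple points on $A$; the Euclidean involution $(x_1,x_2)\mapsto(-x_1,-x_2)$ also fixes $C$ pointwise and acts as $t\mapsto -t$ on $A$, giving a reflection generator of $D_6$. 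All four generators lie in the identity component $\mathrm{SO}^+(5,1)$ of the Lorentz group and so are orientation-preserving on $S^4$. The main obstacle is producing the order-$6$ rotation on $A$ that also fixes the twistor points: no Euclidean motion of $\R^4$ achieves this, and one has to pass to the Lorentz model (equivalently, construct the right M\"obius transformation of $\R^4\cup\{\infty\}$) to exhibit it.
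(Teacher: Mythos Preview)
Your restriction-homomorphism strategy is sound, but the injectivity step contains a genuine error: the claim $V_A \cap V_C = \{0\}$ is false. Both circles lie on the round $3$-sphere containing all nine special points (in the $x'$ coordinates this is the hyperplane $x_1'=0$ together with $\infty$), so both $V_A$ and $V_C$ sit inside the corresponding $(4,1)$-subspace of $\R^{5,1}$ and must meet nontrivially. Concretely, in the standard embedding one finds $V_A=\langle e_2,e_5,e_6\rangle$ and $V_C=\langle e_3,e_4,e_6\rangle$, so $V_A\cap V_C=\langle e_6\rangle$ is a timelike line and $V_A+V_C$ is $5$-dimensional, not $6$. The conclusion you want still holds, but only after invoking the orientation hypothesis: any element of $O(5,1)$ fixing $V_A+V_C$ pointwise acts as $\pm 1$ on the orthogonal spacelike line $\langle e_1\rangle$, and the $-1$ option has determinant $-1$, hence is orientation-reversing on $S^4$. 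So only the identity of $\mathrm{Conf}^+(S^4)$ fixes both circles pointwise---but your argument as written does not reach this.

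The paper takes a rather different route that also dissolves your self-identified ``main obstacle''. It works in the \emph{original} coordinates $q\in\H$, where the six triple points form a regular hexagon on the unit circle in the $(j,k)$-plane and the three twistor points form an equilateral triangle on the unit circle in the $(1,i)$-plane. There the entire group $D_3\times D_6\subset O(2)\times O(2)\subset O(4)$ acts by linear isometries; in particular the order-$6$ element is simply the Euclidean $60^\circ$ rotation of the $(j,k)$-plane, with no Lorentz-model gymnastics needed. The passage from possibly orientation-reversing elements of $O(4)$ to $\mathrm{Conf}^+(S^4)$ uses the observation that angle-preserving maps of the invariant $S^3$ correspond bijectively to orientation-preserving conformal maps of $S^4$ preserving that $S^3$. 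For the upper bound the paper makes a further conformal change sending two triple points to $0$ and $\infty$, forcing any admissible map to be linear and then, since it preserves the other circle, an isometry. Your Lorentz-model argument (once repaired) is a legitimate alternative for the upper bound, but the paper's choice of coordinates makes surjectivity immediate where your $x'$ coordinates make it awkward.
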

\begin{proof}
By Lemma \ref{triplePointLemma}, we have identified a round $3$-sphere in $S^4$ that contains all $9$ non-smooth points. It is the unique such sphere, so it too must be preserved by any element of $G$. There is a one to one correspondence between conformal symmetries of $\R^4$ that leave this $3$-sphere fixed and angle preserving (but not necessarily orientation preserving) symmetries of the $3$-sphere.

The six triple points all lie on a hexagon in the unit circle in the plane spanned by the quaternions $j$ and $k$ and the three twistor lines lie on an equilateral triangle in the unit circle in the plane spanned by $1$ and and $i$. The symmetry group of a regular $n$-gon in the plane is $D_n$. Thus we can find a subgroup $D_3 \times D_6$ of $\subset O(2) \times O(2) \subset O(4)$ acting on $\R^4$ which preserves the twistor lines and triple points. The induced maps on the unit sphere $S^3$ are isometries and hence angle preserving. Thus $G$ contains $D_3 \times D_6$.

To show that there are no further symmetries, we switch to the ${\vec x}^\prime$ coordinates. The unit $3$-sphere becomes the hyperplane $x^\prime_1=0$ together with the point at infinity. Let $C_3$ be the unit circle in the $x^\prime_1=x^\prime_2=0$ plane containing the image of the twistor lines. Let $C_6$ be the line $x^\prime_1=x^\prime_3=x^\prime_4=0$ containing all $6$ triple points.

We can make a further conformal change of coordinates on the three sphere that fixes $C_3$ and $C_6$ but which moves one of the triple points to infinity and another to zero. In these coordinates, any quaternionic M\"obius transformation that fixes the triple points must be given by a linear map. This map must be conformally equivalent to a unitary map. If this map also fixes $C_3$ it must be an isometry. Thus the group of transformations that fix $C_6$ and permute $C_3$ is $D_3$.

We can make a conformal transformation of the three spheres that swaps $C_3$ and $C_6$. The argument above then shows that the group of transformations that fix $C_3$ and permute $C_6$ is the dihedral group $D_6$. The result follows.
\end{proof}
\begin{lemma}The discriminant locus of the Fermat cubic is invariant under $G$.
\end{lemma}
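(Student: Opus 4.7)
The plan is to verify $G$-invariance of the discriminant locus directly from an explicit polynomial description of it. The conformal symmetry group $H \cong \Z_3 \times \Z_2$ of the Fermat cubic itself is a subgroup of $G$ of index $12$, and automatically preserves the discriminant locus, so it suffices to check invariance under a set of coset representatives for $G/H$.

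First I would compute the defining polynomial of the discriminant locus in the $\vec{x}^\prime$ coordinates via computer algebra. By Proposition \ref{algebraicProposition}, since the Fermat cubic contains a twistor line at infinity in these coordinates, the degree of this polynomial is only $8$ rather than $12$, making the calculation tractable. As already noted earlier in the paper, the resulting polynomial depends on $\theta$ only through $\cos(3\theta)$ and $\sin(3\theta)$, which immediately establishes invariance under the rotation $\theta \mapsto \theta + 2\pi/3$ about the $x_2^\prime$-axis.

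Second, I would identify explicit coset representatives for $G/H$. In $\vec{x}^\prime$ coordinates $G$ preserves the round $3$-sphere $\{x_1^\prime=0\}\cup\{\infty\}$ and acts there as $D_3 \times D_6$ fixing the circles $C_3$ and $C_6$ setwise. Natural generators of $G$ beyond $H$ include: the reflection $\theta \mapsto -\theta$, which is a reflection in the $D_3$ factor acting on $C_3$; a quaternionic M\"obius transformation inducing a rotation of order $6$ on $C_6$ (viewed as a great circle through $\infty$); and a reflection of $C_6$ such as $x_2^\prime \mapsto -x_2^\prime$. For each such transformation, I would substitute its coordinate expression into the polynomial computed above and verify invariance symbolically.

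The main obstacle is bookkeeping rather than mathematical depth: confirming that the chosen transformations together with $H$ really generate all $72$ elements of $G \cong D_3 \times D_6$, and not merely a proper subgroup. This reduces to a routine check of their images in the faithful permutation representation of $G$ on the $9$ non-smooth points used in the previous lemma. The polynomial invariance calculations themselves are straightforward by computer algebra, thanks to the drastic degree reduction provided by Proposition \ref{algebraicProposition}; this is precisely the kind of application anticipated in remark (iii) of Section \ref{algebraicSection}.
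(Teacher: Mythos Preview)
Your overall strategy---write down explicit defining equations for the discriminant locus and verify invariance under a generating set for $G$ by direct substitution---is the same as the paper's, but the execution differs in a way worth noting. The paper does \emph{not} work in the $\vec{x}^\prime$ coordinates for this lemma. Instead it introduces \emph{double} polar coordinates $x_1=a\cos\alpha$, $x_2=a\sin\alpha$, $x_3=b\cos\beta$, $x_4=b\sin\beta$ in the \emph{original} $\vec{x}$ coordinates, where the distinguished $3$-sphere is $\{a^2+b^2=1\}$ and $G\cong D_3\times D_6$ acts by rotations and reflections in the two orthogonal coordinate planes. The defining equations then involve $\alpha$ and $\beta$ only through $\sin(3\alpha),\cos(3\alpha),\cos(6\alpha)$ and $\sin(3\beta),\cos(3\beta),\sin(6\beta)$, so invariance under the whole rotation subgroup $\Z_3\times\Z_6$ is immediate. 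The two remaining reflections are realised as the inversion $(a,b)\mapsto (a,b)/(a^2+b^2)$ combined with $\alpha\mapsto-\alpha$ or $\beta\mapsto-\beta$, and each is checked by a single substitution. No appeal to the subgroup $H$ is needed.

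In your $\vec{x}^\prime$ coordinates the circle $C_6$ is a line through $\infty$, so your order-$6$ rotation of $C_6$ is a genuine M\"obius transformation rather than a linear rotation, and the detour through $H$ adds bookkeeping without saving work (your listed generators already produce all of $D_3\times D_6$). There is also a factual slip: in the $\vec{x}^\prime$ system the point at infinity is the image of $q=k$, which is \emph{not} one of the three twistor-line base points, so Proposition~\ref{algebraicProposition} yields degree $12$ there, not $8$; the degree-$8$ simplification in Section~\ref{fermatSection} referred to a different auxiliary coordinate system with a twistor line deliberately placed at $\infty$. None of this is fatal---your plan would go through with more computation---but the paper's double-polar choice is what makes the verification essentially a one-line observation.
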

\begin{proof}
Making the coordinate change $x_1=a \cos \alpha$, $x_2=a \sin \alpha$, $x_3=b \cos \beta$, $x_4=b \sin \beta$ and simplifying yields the following equations for discriminant locus of the
Fermat cubic:
\begin{equation}
-2 a^3 \cos(3 \beta) \sin(
   3 \alpha) + (-1 + a^6 + 3 a^4 b^2 + 3 a^2 b^4 + b^6) \sin(3 \beta) = 0
\label{alphabetaim}
\end{equation}
\begin{multline}
1 + 4 a^6 - 4b^6 + a^{12} +b^{12} + 6 a^4b^2 - 6 a^2b^4 + 6 a^{10}b^2 + 6 a^2b^{10} + \\
   15 a^8b^4  + 15 a^4b^8 + 20 a^6b^6 +
   2b^6 \sin(6 \beta) + 
   2 a^6 \cos(6 \alpha) + \\
   4 a^3 (1 + a^6 +b^6 + 3 a^4b^2 + 3 a^2b^4 ) \cos(3 \alpha)  
 =0
\label{alphabetare}
\end{multline}   
The invariance of the discriminant locus under the rotations $\Z_3 \times \Z_6 \subset D_3 \times D_6$ is now manifest.

Under the transformation $(a \mapsto a/(a^2 + b^2), b \mapsto b/(a^2 + b^2), \alpha\mapsto-\alpha, \beta \mapsto\beta)$, these two equations are transformed to non-zero multiples of themselves. So the discriminant locus is invariant under this transformation. Similarly it is invariant under the transformation  $(a \mapsto a/(a^2 + b^2), b \mapsto b/(a^2 + b^2), \alpha\mapsto\alpha, \beta\mapsto-\beta)$. 

Together these transformations generate $G$.
\end{proof}

Although the coordinates used in the above Lemma give the simplest explanation for the symmetries of the discriminant locus, the topology seems easier to understand in the $x^\prime$ coordinates. This is because all the vertices of the cell decomposition can be viewed at the single time $x^\prime_1=0$.  It is by combining the best features of both coordinate systems that we will be able to certify the topology of the discriminant locus. Therefore, let us consider the equations for the discriminant locus in the cylindrical polar coordinates used in the previous section. The equations take form:
\begin{equation}
p_1(r,x^\prime_1,x^\prime_2) + r^6 \cos (6 \theta) +
p_2(r,x^\prime_1,x^\prime_2) \sin(3 \theta) = 0
\label{rePolar}
\end{equation}
and
\begin{equation} p_3(r, x^\prime_1,x^\prime_2) +
p_4(r,x^\prime_1,x^\prime_2) \cos(3 \theta)=0
\label{imPolar}
\end{equation}
for some polynomials $p_1$, $p_2$, $p_3$ and $p_4$. So long as
$p_4 \neq 0$
we can solve for $\cos(3 \theta)$ using equation \eqref{imPolar}.
We can then substitute the resulting value for $\cos(6 \theta)=2
\cos(3 \theta)^2 -1$ into equation \eqref{rePolar} and hence solve
for $\sin(3 \theta)$, assuming $p_2 \neq 0$.
\begin{eqnarray*}
\cos(3 \theta) & = & -\frac{p_3}{p_4} \\
\sin(3 \theta) & = & \frac{r^6 ( p_4^2 - 2p_3^2) -p_4^2 p_1 }{p_2
p_4^2}
\end{eqnarray*}
These equations have a unique solution for $\theta \in
[0,\frac{2 \pi}{3})$ so long as the sum of the squares of the
right hand sides is equal to 1. Thus away from the points where
$p_4 =0$ or $p_2=0$ we see that the discriminant locus is a
triple cover of the surface defined by the equation:
\begin{equation}
p_2^2 p_3^2 p_4^2 + (r^6 ( 1 - 2p_3^2) -p_4^2 p_1)^2 = p_2^2
p_4^4
\label{polarEqn}
\end{equation}
and the inequality
\begin{equation}
r \geq 0.
\label{boundsInequality}
\end{equation}

Excluding the cases where $p_4=0$ or $p_2=0$, we have reduced
our problem from one of considering the intersection of two
polynomials in $\R^4$ to a problem involving the zero set of a
single polynomial in $\R^3$. This is a considerable
simplification since the topology of surface defined by a single
equation is much easier to understand than a surface defined by
two equations. Having said that, the degree of equation \eqref{polarEqn} is $44$. One also needs to give careful consideration to the situation when $p_4=0$ or $p_2=0$. We omit the details, but one can, with the aid of computer algebra, prove all solutions with $p_4=0$ or $p_2=0$ in fact satisfy equation \eqref{polarEqn}.

We now make a conformal transformation of
equation \eqref{polarEqn} sending the six triple points to a
circle. We transform to new
coordinates ($(x,y,t)$) as follows:

\begin{eqnarray*}
r & = & \frac{1-x^2 - y^2 - t^2 }{x^2 + t^2 + (1 - y)^2} \\
x^{\prime}_1 & = & \frac{2 t}{x^2 + t^2 + (1 - y)^2} \\
x^{\prime}_2 & = & \frac{2 x}{x^2 +t^2 + (1-y)^2}
\end{eqnarray*}

The inequality \eqref{boundsInequality} transforms under this
coordinate change to the condition $x^2 + y^2 + t^2 <= 1$.
After this transformation, a $D_6$ symmetry does indeed become
clear.  We once again introduce cylindrical polar coordinates $y = R \sin \phi $, $u_2 = R \cos \phi$. The inequality \eqref{boundsInequality} becomes $R^2 + t^2 <= 1$.
Equation \eqref{polarEqn} takes the form:
\begin{equation}
\sum_{n \in \{6,12,18,24\} } p_n(R,t) R^{12} \cos(n \phi) 
\label{formOfDoublePolar}
\end{equation}
for appropriate polynomials $p_n$.

We write $\cos( 6 \phi) = \Phi$ for a new coordinate $\Phi \in
[-1,1]$. With this coordinate change \eqref{polarEqn} becomes a
quartic in $\Phi$ with coefficients that are polynomials in $R$
and $u_1$. The discriminant locus will be a $3 \times 12=36$
sheeted cover of the implicit surface obtained.

It turns out these coefficients are of entirely even degree in
both $R$ and $t$. So we make another simplification and introduce
new variables $(S,T)$ with $S=R^2$ and $T=t^2$ with the
conditions $S \geq 0$ and $x \geq 0$. Note that this indicates another symmetry of the discriminant locus given by the map $t->-t$.

By means of these algebraic manipulations we have now confirmed that $G$ preserves the discriminant locus and simultaneously introduced simpler coordinates.

The final change of coordinates is to write $Z=S+T$ and work
with the coordinates $(Z,\Phi,T)$. This final change of
coordinates is designed so that \eqref{boundsInequality}
transforms to the simple condition $Z \leqslant 1$.

In summary then, we have introduced new coordinates $(T,Z,\Phi)$
such that the discriminant locus is a $72$ sheeted cover of the
implicit surface defined by the inequalities:
\begin{equation}
T \geq 0,\quad 0 \leqslant Z \leqslant 1, \quad -1 \leqslant \Phi \leqslant 1
\label{quarticBoundary}
\end{equation}
and the transformed equation \eqref{polarEqn}. The transformed
equation will be of degree $4$ in $\Phi$.

We know from equation \eqref{formOfDoublePolar} that there is a
constant factor of $R^{12}$ that we can cancel. After making
this cancellation and also the cancellation of a constant
factor, one obtains the equation:

\begin{flushleft}
{\scriptsize
$
426 T^2 - 228 \Phi T^2 - 6 \Phi^2 T^2 + 3792 T^3 - 496 \Phi T^3
-
 848 \Phi^2 T^3 - 16 \Phi^3 T^3 + 7584 T^4 
 + 96 \Phi T^4 -  1056 \Phi^2 T^4 - 480 \Phi^3 T^4 + 4608 T^5 +
1536 \Phi T^5 +  1536 \Phi^2 T^5 - 1536 \Phi^3 T^5
+ 128 T^6 + 1024 \Phi T^6 +   1792 \Phi^2 T^6 - 1024 \Phi^3 T^6
+ 128 \Phi^4 T^6 + 228 T Z +  240 \Phi T Z
+ 12 \Phi^2 T Z + 3936 T^2 Z + 432 \Phi T^2 Z + 1728 \Phi^2 T^2
Z
+ 48 \Phi^3 T^2 Z + 22944 T^3 Z - 288 \Phi T^3 Z -
1056 \Phi^2 T^3 Z + 1440 \Phi^3 T^3 Z + 31104 T^4 Z - 384 \Phi
T^4 Z -
 1920 \Phi^2 T^4 Z + 1920 \Phi^3 T^4 Z + 8448 T^5 Z +
 4608 \Phi^2 T^5 Z - 768 \Phi^4 T^5 Z - 6 Z^2 - 12 \Phi Z^2 - 
 6 \Phi^2 Z^2 + 912 T Z^2 + 48 \Phi T Z^2 - 912 \Phi^2 T Z^2 -
 48 \Phi^3 T Z^2 + 13368 T^2 Z^2 + 1968 \Phi T^2 Z^2 + 
 5304 \Phi^2 T^2 Z^2 - 1440 \Phi^3 T^2 Z^2 + 55344 T^3 Z^2 -
3600 \Phi T^3 Z^2 - 3504 \Phi^2 T^3 Z^2 + 3600 \Phi^3 T^3 Z^2 + 47424 T^4 Z^2 - 5568 \Phi T^4 Z^2 - 10176 \Phi^2 T^4 Z^2 + 
 3264 \Phi^3 T^4 Z^2 + 1920 \Phi^4 T^4 Z^2 + 4608 T^5 Z^2 + 
1536 \Phi T^5 Z^2 + 1536 \Phi^2 T^5 Z^2 - 1536 \Phi^3 T^5 Z^2 + 16 \Phi Z^3 + 32 \Phi^2 Z^3 + 16 \Phi^3 Z^3 + 1872 T Z^3 - 
 1824 \Phi T Z^3 - 3216 \Phi^2 T Z^3 + 480 \Phi^3 T Z^3 + 
 27168 T^2 Z^3 + 4176 \Phi T^2 Z^3 + 6336 \Phi^2 T^2 Z^3 - 
 6960 \Phi^3 T^2 Z^3 + 74944 T^3 Z^3 - 1472 \Phi T^3 Z^3 - 
4544 \Phi^2 T^3 Z^3 + 192 \Phi^3 T^3 Z^3 - 2560 \Phi^4 T^3 Z^3 + 31104 T^4 Z^3 - 384 \Phi T^4 Z^3 - 1920 \Phi^2 T^4 Z^3 + 
 1920 \Phi^3 T^4 Z^3 + 24 Z^4 + 48 \Phi Z^4 + 24 \Phi^2 Z^4 + 
3696 T Z^4 - 1584 \Phi T Z^4 - 2160 \Phi^2 T Z^4 + 3120 \Phi^3 T
Z^4 +
 35004 T^2 Z^4 + 8808 \Phi T^2 Z^4 + 12444 \Phi^2 T^2 Z^4 - 
 4800 \Phi^3 T^2 Z^4 + 1920 \Phi^4 T^2 Z^4 + 55344 T^3 Z^4 - 
3600 \Phi T^3 Z^4 - 3504 \Phi^2 T^3 Z^4 + 3600 \Phi^3 T^3 Z^4 + 7584 T^4 Z^4 + 96 \Phi T^4 Z^4 - 1056 \Phi^2 T^4 Z^4 - 
480 \Phi^3 T^4 Z^4 - 144 \Phi Z^5 - 288 \Phi^2 Z^5 - 144 \Phi^3
Z^5 +
4248 T Z^5 - 2976 \Phi T Z^5 - 4344 \Phi^2 T Z^5 + 2112 \Phi^3 T
Z^5 -
 768 \Phi^4 T Z^5 + 27168 T^2 Z^5 + 4176 \Phi T^2 Z^5 + 
 6336 \Phi^2 T^2 Z^5 - 6960 \Phi^3 T^2 Z^5 + 22944 T^3 Z^5 - 
288 \Phi T^3 Z^5 - 1056 \Phi^2 T^3 Z^5 + 1440 \Phi^3 T^3 Z^5 +
92 Z^6 +
184 \Phi Z^6 + 220 \Phi^2 Z^6 + 256 \Phi^3 Z^6 + 128 \Phi^4 Z^6
+
3696 T Z^6 - 1584 \Phi T Z^6 - 2160 \Phi^2 T Z^6 + 3120 \Phi^3 T
Z^6 +
 13368 T^2 Z^6 + 1968 \Phi T^2 Z^6 + 5304 \Phi^2 T^2 Z^6 - 
 1440 \Phi^3 T^2 Z^6 + 3792 T^3 Z^6 - 496 \Phi T^3 Z^6 - 
848 \Phi^2 T^3 Z^6 - 16 \Phi^3 T^3 Z^6 - 144 \Phi Z^7 - 288
\Phi^2 Z^7 -
144 \Phi^3 Z^7 + 1872 T Z^7 - 1824 \Phi T Z^7 - 3216 \Phi^2 T
Z^7 +
 480 \Phi^3 T Z^7 + 3936 T^2 Z^7 + 432 \Phi T^2 Z^7 + 
1728 \Phi^2 T^2 Z^7 + 48 \Phi^3 T^2 Z^7 + 24 Z^8 + 48 \Phi Z^8 +24 \Phi^2 Z^8 + 912 T Z^8 + 48 \Phi T Z^8 - 912 \Phi^2 T Z^8 -48 \Phi^3 T Z^8 + 426 T^2 Z^8 - 228 \Phi T^2 Z^8 - 6 \Phi^2 T^2
Z^8 +
 16 \Phi Z^9 + 32 \Phi^2 Z^9 + 16 \Phi^3 Z^9 + 228 T Z^9 + 
240 \Phi T Z^9 + 12 \Phi^2 T Z^9 - 6 Z^{10} - 12 \Phi Z^{10} - 6
\Phi^2 Z^{10} = 0.$
}
\end{flushleft}
\begin{equation}
\label{quartic}
\end{equation}
Ugly though this formula is, from a computational complexity viewpoint it is a simplification of equations \eqref{alphabetaim} and \eqref{alphabetare}. We started with two
degree $12$ polynomials in $4$ variables and have reduced the
problem to a single degree $12$ polynomial in $3$ variables.
Moreover, the polynomial is of degree $4$ in one of those
variables.

\begin{figure}[ht]
\begin{centering}
\includegraphics{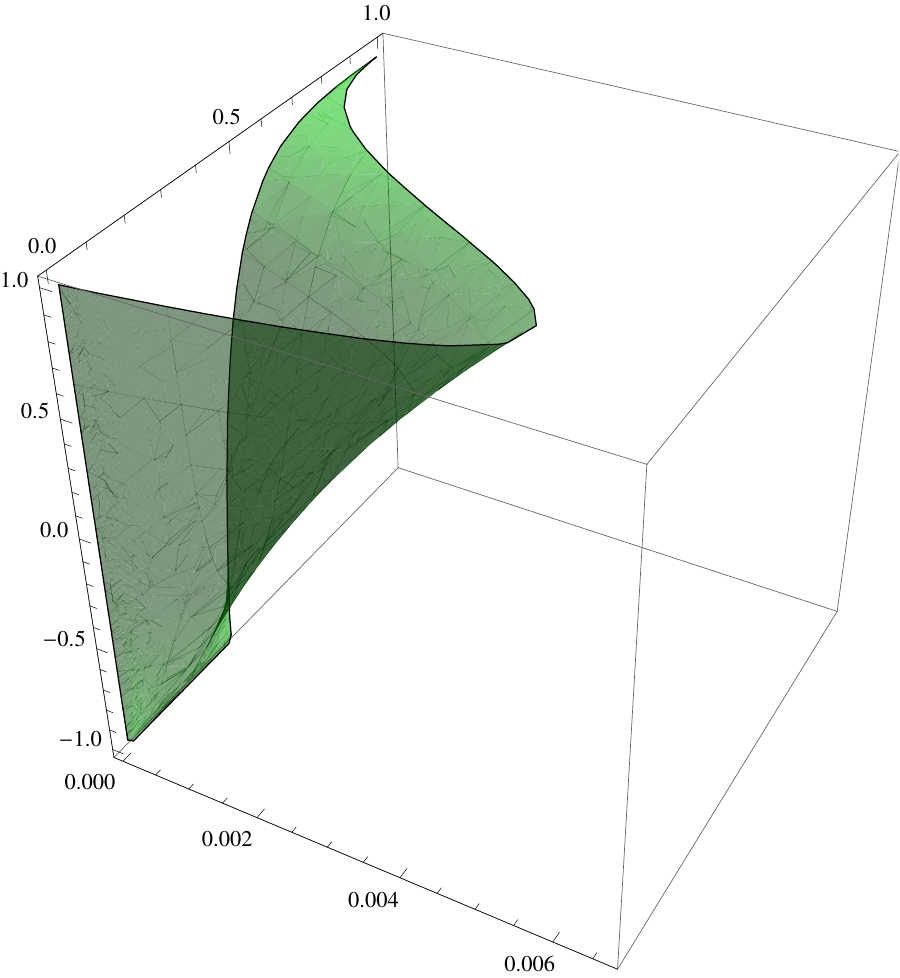}
\end{centering}
\caption{The surface defined by equations \eqref{quarticBoundary}
and \eqref{quartic}.}
\label{fermatFundamentalDomain}
\end{figure}

A plot of the surface defined by these conditions is shown in
Figure \ref{fermatFundamentalDomain}. Since we introduced $S=R^2$ where $R$ is a radial coordinate, one needs to identify certain points with $S=0$. Thus a fundamental domain of $G$ is given by  an appropriate quotient of Figure \ref{fermatFundamentalDomain}.

We have identified a group of order $72$ acting on the
discriminant locus. Figure \ref{fermatFundamentalDomain}
represents a fundamental domain of this group action. 

The cell decomposition used in the proof of Proposition \ref{topologyCalculation} only had $36$ faces. To obtain the $72$ faces associated with $G$, one divides each face in the cell decomposition used earlier into two pieces by introducing new edges from the twistor lines $T_j$ to the triple points $V_i$. With this understood, the proof of Proposition \ref{topologyCalculation} amounts to a computation of the topology of the tiling of the covering space with copies of this fundamental domain.

\section{Cylindrical algebraic decomposition of the fundamental domain}
\label{cylindricalDecompositionSection}

We now wish to:
\begin{enumerate}
\item Identify the points on Figure
\ref{fermatFundamentalDomain} that correspond to points with non
trivial stabilizer under $G$ and confirm that they split into
vertices and edges homeomorphic to $[0,1]$ as expected.
\item Show that the Figure \ref{fermatFundamentalDomain}, with the appropriate
quotients when $S=0$, is
homeomorphic to the closed unit disc with boundary given by the
points with non trivial stabilizer under $G$.
\end{enumerate}
If we can complete these tasks we will have rigorously identified a cell decomposition of the discriminant locus.

We begin with the first part. By construction of our coordinate
system, the points with non zero stabilizer are points with
$S=0$, $\Phi=-1$, $T=0$, $\Phi=1$. Simplifying equation
\eqref{quartic} for each of the first three cases one obtains the
following simple equations:
\begin{equation}
8 T^2 (3 + 10 T + 3 T^2)^4 = 0
\label{stabilizer1}
\end{equation}
\begin{equation}
8 T^2 (3 + 3 S^2 + 10 T + 3 T^2 + 6 S (1 + T))^4 = 0
\label{stabilizer2}
\end{equation}
\begin{multline}
2 (1 + \Phi)^2 Z^2 (-3 + 12 Z^2 + 46 Z^4 + 64 \Phi^2 Z^4 \\
 + 12 Z^6 -   3 Z^8 
+ 8 \Phi (Z - 9 Z^3 - 9 Z^5 + Z^7))=0
\label{stabilizer3}
\end{multline}

The first equation has solutions $T=0$, $\Phi \in [-1,1]$. The
second equation has the unique solution $S=T=0$. The third is
quadratic in $\Phi$ so is also easily understood. All of these
solutions have in common the fact that $T=0$. So referring back
to Figure \ref{fermatDiscriminantLocus} these solutions
correspond to the edges shown at time $x^\prime_1=0$.

The solutions of the equation $\Phi=1$ correspond to edges
used to split the cell decomposition consisting of $36$ faces into one of $72$ faces. The equation in this case is a little
more complex, but does factor into two cubics in $T$:
\begin{multline}
\mbox{\scriptsize $8(1 + 12 T + 24 T^2 + 16 T^3 + 36 T Z + 48 T^2 Z - 6 Z^2 + 36 T
Z^2 +
24 T^2 Z^2 + 8 Z^3 + 60 T Z^3 - 3 Z^4) \times$} \\
\mbox{\scriptsize $(24 T^2 + 16
T^3 + 60 T Z +
48 T^2 Z - 3 Z^2 + 36 T Z^2 + 24 T^2 Z^2 + 8 Z^3 + 36 T Z^3 -   6 Z^4 + 12 T Z^4 + Z^6)= 0$}.
\label{stabilizer4}
\end{multline}

We need to show that the curve defined by this equation 
together with the conditions $0 \leqslant Z \leqslant 1$ and $0 \leqslant
T \leqslant 1$ is homeomorphic to $[0,1]$. One could do this with
one's bare hands, but we will instead discuss how one can use the
``cylindrical algebraic decomposition'' algorithm.

Cylindrical algebraic decomposition is a foundational algorithm
in
computational real algebraic geometry. It is an algorithm that
allows one to decompose a semi-algebraic set (that is a real set
defined by polynomial equalities and inequalities) into pieces
called cylindrical sets which are all homeomorphic to either
$\{0\}$ or $\R^i$ for some $i$. In effect, it computes a CW complex that is guaranteed to be homeomorphic to the semi-algebraic set. See \cite{basu} for more information. 

There is a catch: the running time of cylindrical algebraic
decomposition is of the order $(s d)^{c^{n-1}}$ where $s$ is the
number of equations and inequations used to define the semi-algebraic
set, $n$ is the number of variables, $d$ is the
maximum degree of the polynomials and $c$ is a constant. This
doubly exponential running time means that cylindrical algebraic
decomposition is only effective for rather small problems.

\begin{figure}[ht]
\begin{centering}
\includegraphics{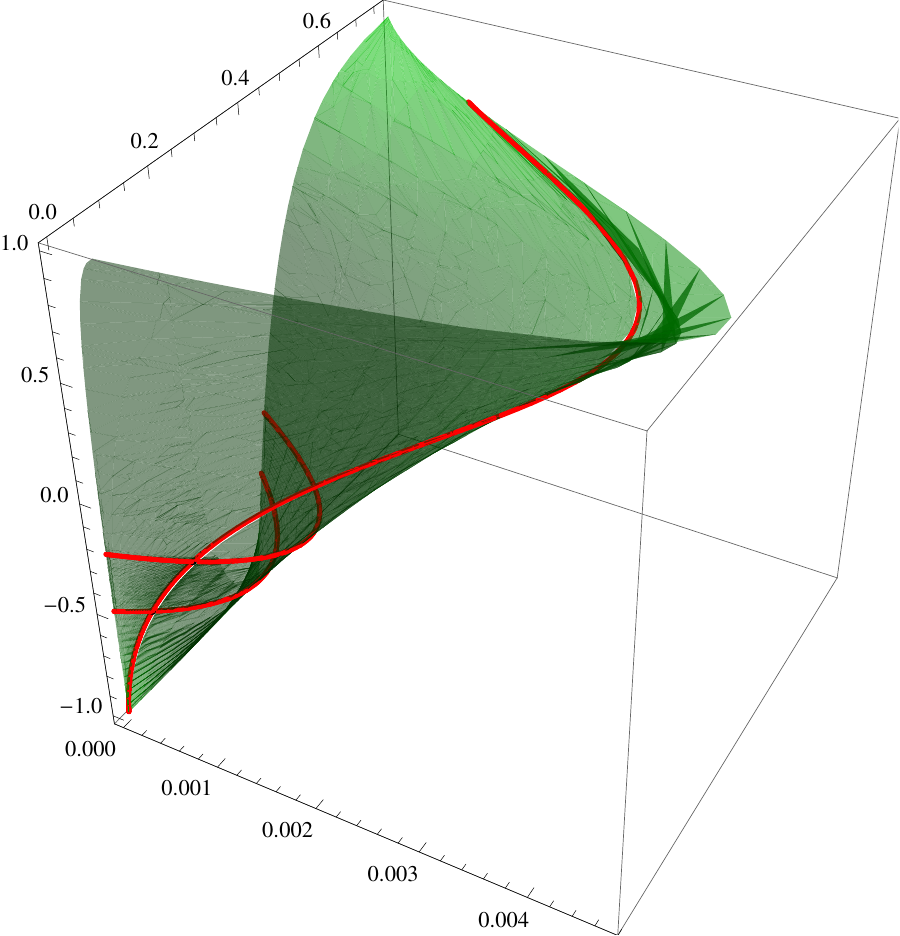}
\end{centering}
\caption{Cylindrical algebraic decomposition of the fundamental
domain}
\label{cad}
\end{figure}

Fortunately the semi-algebraic set defined by \eqref{stabilizer4}
and $0 \leqslant Z \leqslant 1$ and $0 \leqslant T \leqslant 1$ is appropriately
small and one can perform the cylindrical algebraic
decomposition quickly. The algorithm works inductively by using
resultants to project the equations along each of the coordinate
axes to obtain lower dimensional equations. Thus the specific
cylindrical decomposition depends upon the semi-algebraic set and the ordering of the coordinate axes. For this case, if one chooses the order $(Z,T)$ one obtains a decomposition into the following three cylindrical sets:
\begin{align*}
&\{(0,0)\}, \\
&\{(1,0)\}, \, \\
&\{(Z,T) : 0<Z<1 \hbox{ and } T=\hbox{root}_\lambda(
-3 Z^2 + 8 Z^3 - 6 Z^4 + Z^6 + \\
&\quad(60 Z + 36 Z^2 + 36 Z^3 + 12 Z^4) \lambda +
 (24 + 48 Z +  24 Z^2) \lambda^2 + 16 \lambda^3, 3) \}
\end{align*}
Here we are using the notation $\hbox{root}_{\lambda}(
p(\lambda), n)$ to denote the $n^{\mathrm{th}}$ real root of a
polynomial $p$ in the variable $\lambda$. We conclude that the
semi-algebraic set defined by equation \eqref{stabilizer4} and $0 \leqslant Z
\leqslant 1$ and $0 \leqslant T \leqslant 1$ is homeomorphic to $[0,1]$ as
required.

The final ingredient required to complete the proof is to show
that the semi-algebraic set defined by equations \eqref{quarticBoundary}
and \eqref{quartic} together with the appropriate quotienting when $S=0$
is homeomorphic to the closed disc and that its
boundary consists
of the stabilizer just identified. Using Mathematica we do this by computing the cylindrical algebraic decomposition with respect to the variable ordering $\Phi$, $Z$, $T$. The computation takes several minutes to run.
Notice that our many simplifications of the equations defining the discriminant locus are crucial to achieving this. For example simply failing to introduce the
coordinate $Z=S+T$ is enough to prevent the algorithm running to completion on our computers.

Since the cylindrical algebraic decomposition in this case is far too lengthy to print out, we have simply plotted the
decomposition in Figure \ref{cad}. It consists of six faces. As we saw in the example above, the specific formulae in the cylindrical algebraic decomposition are irrelevant for our purposes. All that matters is the topology implied by the decomposition. Thus the picture in Figure \ref{cad} summarises all the key information we need about the cylindrical algebraic decomposition. We conclude that our fundamental domain is homeomorphic to 6 closed discs glued as indicated. Hence it is homeomorphic to a disc. Thus the cylindrical algebraic decomposition provides rigorous confirmation of what was visually obvious in Figure \ref{fermatFundamentalDomain}.

We summarize our findings which confirm Proposition \ref{topologyCalculation}:

\begin{theorem}
The discriminant locus of the Fermat cubic has topology represented by the graph:
\[
\begin{tikzpicture}[scale=1]
\node (n) at (1,1) {$\Sigma_4$} ;
\node [circle, draw, fill=white,inner sep=0pt, minimum width=4pt] (a) at (0,0) {};
\node [circle, draw, fill=white,inner sep=0pt, minimum width=4pt] (b) at (1,0) {};
\node [circle, draw, fill=white,inner sep=0pt, minimum width=4pt] (c) at (2,0) {};
\draw [double] (a) -- (n); 
\draw [double] (b) -- (n); 
\draw [double] (c) -- (n); 
\end{tikzpicture}.
\]
It has conformal symmetry group $D_3 \times D_6$.
\end{theorem}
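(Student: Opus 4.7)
The plan is to assemble the theorem from three ingredients already developed in the paper: the symmetry lemmas of Section \ref{symmetrySection}, the topology calculation of Proposition \ref{topologyCalculation}, and a cylindrical algebraic decomposition argument that certifies the cell decomposition used in that proposition. The symmetry group statement follows immediately from the two lemmas of Section \ref{symmetrySection}, since the first bounds the group of conformal symmetries preserving the nine distinguished non-smooth points by $D_3 \times D_6$ and the second exhibits this group as actually preserving the discriminant locus; the pair of identifications (triple points as the non-smooth-but-locally-planar points, twistor lines as the topologically singular points) are intrinsic to the topology, so any conformal symmetry of the discriminant locus must lie in $G$.

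The bulk of the work is rigorously justifying the cell decomposition visually identified in Section \ref{fermatSection}, since everything else in the proof of Proposition \ref{topologyCalculation} is combinatorial once the decomposition is in hand. I would carry this out in two stages, both using CAD, on the semi-algebraic set defined by \eqref{quarticBoundary} and \eqref{quartic}. Stage one identifies the $G$-fixed locus of the fundamental domain shown in Figure \ref{fermatFundamentalDomain}: I would substitute each of $S=0$, $\Phi=\pm 1$, and $T=0$ into \eqref{quartic}, obtaining the reduced equations \eqref{stabilizer1}--\eqref{stabilizer4}, and run CAD on each to verify that the resulting semi-algebraic curves consist of finitely many points together with arcs each homeomorphic to $[0,1]$. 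Stage two runs CAD on the full three-variable system with variable order $\Phi, Z, T$; reading the adjacency data off the output (as summarised in Figure \ref{cad}) shows that the fundamental domain, after the quotient along $S=0$ forced by the substitution $S=R^2$, is a union of six closed topological discs glued along arcs into a single closed disc whose boundary is exactly the fixed locus found in stage one.

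Having certified that the fundamental domain is a disc with the expected boundary stratification, I would then pull back via the $72$-fold covering by $G$ to conclude that the discriminant locus is tiled by $72$ topological discs, with gluings determined combinatorially by $G$ acting on the boundary arcs. Merging each pair of adjacent $G$-tiles across the edge joining a triple point $V_i$ to a twistor line singularity $T_j$ recovers the $36$-face decomposition used in Proposition \ref{topologyCalculation}, and the combinatorial boundary formulae for $\partial f^{\pm}_{i,j}$ asserted in that proof can now be read off the CAD data rather than from the figures. Proposition \ref{topologyCalculation} then gives the graph with $\Sigma_4$, completing the proof.

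The principal obstacle is the second CAD computation. The defining polynomial \eqref{quartic} has degree $12$ in three variables with several hundred monomials, and CAD has doubly-exponential running time in the number of variables; so the long chain of coordinate changes in Section \ref{symmetrySection} (cylindrical polars, then $(S,T) = (R^2, t^2)$, then $Z=S+T$, together with the reduction to degree $4$ in $\Phi$) is not cosmetic but essential for the computation to terminate. Even with these reductions, the choice of projection order matters: only $\Phi, Z, T$ is known to finish on available hardware. The delicate part of the write-up is therefore explaining why the chosen simplifications are sufficient, while acknowledging that the final CAD output itself is too large to print and must be trusted via the adjacency summary in Figure \ref{cad}.
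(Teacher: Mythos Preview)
Your proposal is correct and follows essentially the same approach as the paper: you assemble the symmetry group from the two lemmas of Section~\ref{symmetrySection} together with the intrinsic identifiability of the nine distinguished points, and you certify the cell decomposition underlying Proposition~\ref{topologyCalculation} by running CAD first on the stabilizer equations \eqref{stabilizer1}--\eqref{stabilizer4} and then on the full system \eqref{quarticBoundary}--\eqref{quartic} with variable order $\Phi, Z, T$, exactly as the paper does in Section~\ref{cylindricalDecompositionSection}. The only minor deviation is that the paper handles \eqref{stabilizer1}--\eqref{stabilizer3} by direct inspection rather than CAD (they are trivial), reserving CAD for \eqref{stabilizer4} and the three-variable system; otherwise your outline matches the paper's argument step for step.
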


In \cite{armstrongSalamonSigma} a less formal computation is given for the discriminant locus of the cubic:
\[ z_1^2 z_4 + z_4^2 z_1 + z_2^2 z_3 + z_3^2 z_2 = 0. \]
This is called the {\em transformed Fermat cubic} since it is conformally equivalent but not projectively equivalent to the Fermat cubic. According to the visual examination of the discriminant locus carried out in \cite{armstrongSalamonSigma} this has topology given by the graph:
\[
\begin{tikzpicture}[scale=1]
\node (n) at (2,1) {$\Sigma_1$} ;
\node (m) at (2,-1) {$\Sigma_1$} ;
\node [circle, draw, fill=white,inner sep=0pt, minimum width=4pt] (a) at (0,0) {};
\node [circle, draw, fill=white,inner sep=0pt, minimum width=4pt] (b) at (1,0) {};
\node [circle, draw, fill=white,inner sep=0pt, minimum width=4pt] (c) at (2,0) {};
\node [circle, draw, fill=white,inner sep=0pt, minimum width=4pt] (d) at (3,0) {};
\node [circle, draw, fill=white,inner sep=0pt, minimum width=4pt] (e) at (4,0) {};
\draw [-] (a) -- (n); 
\draw [-] (b) -- (n); 
\draw [-] (c) -- (n); 
\draw [-] (d) -- (n); 
\draw [-] (e) -- (n); 
\draw [-] (a) -- (m); 
\draw [-] (b) -- (m); 
\draw [-] (c) -- (m); 
\draw [-] (d) -- (m); 
\draw [-] (e) -- (m); 
\end{tikzpicture}.
\]
Unfortunately we are not able to reduce the problem to a computationally feasible cylindrical algebraic decomposition in this case, so the result is not fully rigorous. The five topological singularities in this graph correspond to five twistor lines on the transformed Fermat cubic.

It seems that the local topology at the twistor lines is the same for both the rotated Fermat cubic and the Fermat cubic. Likewise the local topology at the  triple points is the same in both cases. It would be very interesting if one could catalogue in full the possible topologies at twistor lines and triple points on a non-singular cubic surface and thereby develop a more extensive theory of twistor cubic surfaces.

\bibliographystyle{alpha}	
\bibliography{fermat}

\end{document}